\pgfplotsset{compat=newest}
\pgfplotsset{plot coordinates/math parser=false}
\newlength{\figwidth}
\newlength{\figheight}
\definecolor{gray1}{gray}{0.0}
\definecolor{gray2}{gray}{0.25}
\definecolor{gray3}{gray}{0.5}
\definecolor{gray4}{gray}{0.7}
\definecolor{gray5}{gray}{0.9}
\newlength{\prel}\setlength{\prel}{0.1cm} % For pre and post offset of brackets
\pgfplotsset{
  % Title font
  title style = {font=\small},
}
\numberwithin{equation}{section}
\declaretheorem[Refname={Theorem,Theorems}]{theorem}
\numberwithin{theorem}{section} % This should commented out if there is to be a non-section-based numbering
\declaretheorem[style=definition,numberlike=theorem,Refname={Remark,Remarks}]{remark}
\declaretheorem[numberlike=theorem,Refname={Lemma,Lemmas}]{lemma}
\declaretheorem[name=Corollary,numberlike=theorem,Refname={Corollary,Corollaries}]{corollary}
\declaretheorem[name=Proposition,numberlike=theorem,Refname={Proposition,Propositions}]{proposition}
\newcommand{\rkhs}{\mathcal{H}} % RKHS
\DeclareMathOperator{\e}{e} % Exponential constant
\renewcommand{\b}[1]{\pmb{#1}} % Bold text
\newcommand{\T}{\mathsf{T}} % Transpose
\DeclarePairedDelimiterX\Set[2]{\lbrace}{\rbrace}%
{ #1 \,:\, #2 }                                         % Set of type {A : B}
\DeclarePairedDelimiterX\inprod[2]{\langle}{\rangle}%
{ #1 , #2 }                                             % Inner product
\DeclarePairedDelimiter\floor{\lfloor}{\rfloor}         % Floor function
\DeclareMathOperator*{\argmin}{arg\,min}  % Argmin
\newcommand{\R}{\mathbb{R}} % Set of real numbers
\newcommand{\N}{\mathbb{N}} % Set of natural numbers
\newcommand{\textsm}[1]{\text{\tiny{\textup{#1}}}}
\title{\textbf{Integration in reproducing kernel Hilbert spaces of Gaussian kernels}}
\author{
  Toni Karvonen\textsuperscript{1}, Chris J.\ Oates\textsuperscript{2,1} and Mark Girolami\textsuperscript{3,1}
  \vspace{0.4cm} \\ \emph{\textsuperscript{1}The Alan Turing Institute, United Kingdom}
  \vspace{0.2cm} \\ \emph{\textsuperscript{2}School of Mathematics, Statistics \& Physics} \\ \emph{Newcastle University, United Kingdom}
  \vspace{0.2cm} \\ \emph{\textsuperscript{3}Department of Engineering} \\ \emph{University of Cambridge, United Kingdom}
}
\begin{document}

\maketitle

\begin{abstract}
  \noindent
  The Gaussian kernel plays a central role in machine learning, uncertainty quantification and scattered data approximation, but has received relatively little attention from a numerical analysis standpoint.
  The basic problem of finding an algorithm for efficient numerical integration of functions reproduced by Gaussian kernels has not been fully solved.
  In this article we construct two classes of algorithms that use $N$ evaluations to integrate $d$-variate functions reproduced by Gaussian kernels and prove the exponential or super-algebraic decay of their worst-case errors. In contrast to earlier work, no constraints are placed on the length-scale parameter of the Gaussian kernel.
  The first class of algorithms is obtained via an appropriate scaling of the classical Gauss--Hermite rules.
  For these algorithms we derive lower and upper bounds on the worst-case error of the forms $\exp(-c_1 N^{1/d}) N^{1/(4d)}$ and $\exp(-c_2 N^{1/d}) N^{-1/(4d)}$, respectively, for positive constants $c_1 > c_2$.
  The second class of algorithms we construct is more flexible and uses worst-case optimal weights for points that may be taken as a nested sequence.
  For these algorithms we derive upper bounds of the form $\exp(-c_3 N^{1/(2d)})$ for a positive constant~$c_3$.
\end{abstract}

\section{Introduction} \label{sec:introduction}

This article considers numerical approximation of a $d$-dimensional Gaussian integral
\begin{equation} \label{eq:integral}
  I_{\b{\alpha}}(f) \coloneqq \int_{\R^d} f(\b{x}) \Bigg[ \prod_{i=1}^d \frac{1}{\sqrt{2\pi} \alpha_i } \exp\bigg(\! -\frac{x_i^2}{2\alpha_i^2} \bigg) \Bigg] \dif \b{x},
\end{equation}
where the integrand $f \colon \R^d \to \R$ is assumed to belong to $\mathcal{H}(K_{\b{\ell}})$, the reproducing kernel Hilbert space (RKHS) of the symmetric positive-definite Gaussian kernel
\begin{equation} \label{eq:gauss-kernel}
  K_{\b{\ell}}(\b{x}, \b{y}) \coloneqq \prod_{i=1}^d K_{\ell_i}(x_i, y_i), \quad\quad K_\ell(x,y) \coloneqq \exp\bigg(\! -\frac{(x-y)^2}{2\ell^2}\bigg),
\end{equation}
where elements of both the variance parameter $\b{\alpha} = (\alpha_1,\dots,\alpha_d)$ and the length-scale parameter $\b{\ell} = (\ell_1, \ldots, \ell_d)$ are positive.
The inner product and norm of $\mathcal{H}(K_{\b{\ell}})$ are denoted $\inprod{\cdot}{\cdot}_{\b{\ell}}$ and $\norm[0]{\cdot}_{\b{\ell}}$.
The Gaussian kernel and its RKHS are commonly used in machine learning~\citep{RasmussenWilliams2006,Steinwart2008}, uncertainty quantification~\citep{Sullivan2015} and scattered data approximation~\citep{Wendland2005,FasshauerMcCourt2015}.
The quality of an integration rule $Q_n(f) \coloneqq \sum_{i=1}^n w_{i} f(\b{x}_{i})$, having points $\b{x}_i \in \R^d$ and weights $w_i \in \R$, for integration of functions in the RKHS can be measured in terms of its worst-case error
\begin{equation} \label{eq:wce-def}
  e_{\b{\alpha}, \b{\ell}}( Q_n ) \coloneqq \sup_{ \norm[0]{f}_{ \b{\ell} } \leq 1} \abs[0]{ I_{\b{\alpha}}(f) - Q_n(f) } = \norm[0]{ \mathcal{I}_{\b{\alpha},\b{\ell}} - \mathcal{Q}_{\b{\ell},n} }_{\b{\ell}},
\end{equation}
where the functions $\mathcal{I}_{\b{\alpha},\b{\ell}}$ and $\mathcal{Q}_{\b{\ell},n}$ are the Riesz representers of the linear functionals $I_{\b{\alpha}}$ and $Q_n$, meaning that $I_{\b{\alpha}}(f) = \inprod{f}{\mathcal{I}_{\b{\alpha},\b{\ell}}}_{\b{\ell}}$ and $Q_n(f) = \inprod{f}{\mathcal{Q}_{\b{\ell},n}}_{\b{\ell}}$ for any $f \in \mathcal{H}(K_{\b{\ell}})$.
By the reproducing property of the kernel $K_{\b{\ell}}$ the representers can be computed pointwise as~\citep[e.g.,][Proposition~3.5]{Oettershagen2017}
\begin{equation} \label{eq:representers-explicit}
  \mathcal{I}_{\b{\alpha},\b{\ell}}(\b{x}) = I_{\b{\alpha}} ( K_{\b{\ell}}(\cdot, \b{x}) ) \quad \text{ and } \quad \mathcal{Q}_{\b{\ell},n}(\b{x}) = \sum_{i=1}^n w_i K_{\b{\ell}}(\b{x}_i, \b{x}).
\end{equation}
  Because $\norm[0]{\mathcal{I}_{\b{\alpha},\b{\ell}}}_{\b{\ell}}^2 = \inprod{\mathcal{I}_{\b{\alpha},\b{\ell}}}{\mathcal{I}_{\b{\alpha},\b{\ell}}}_{\b{\ell}} = I_{\b{\alpha}}(\mathcal{I}_{\b{\alpha},\b{\ell}})$ and $\norm[0]{\mathcal{Q}_{\b{\ell},n}}_{\b{\ell}}^2 = \inprod{\mathcal{Q}_{\b{\ell},n}}{\mathcal{Q}_{\b{\ell},n}}_{\b{\ell}} = Q_{n}(\mathcal{Q}_{\b{\ell},n})$ by definitions of the representers, expansion of the RKHS norm in~\eqref{eq:wce-def} and~\eqref{eq:representers-explicit} yields
\begin{equation} \label{eq:wce-explicit}
  \begin{split}
    e_{\b{\alpha}, \b{\ell}}(Q_n) ={}& \sqrt{ \norm[0]{\mathcal{I}_{\b{\alpha},\b{\ell}}}_{\b{\ell}}^2- 2 \inprod{\mathcal{I}_{\b{\alpha}}}{ \mathcal{Q}_{\b{\ell},n} }_{\b{\ell}} + \norm[0]{\mathcal{Q}_{\b{\ell},n}}_{\b{\ell}}^2 } \\
    ={}& \sqrt{ I_{\b{\alpha}}^{\b{x}} I_{\b{\alpha}}^{\b{y}}( K_{\b{\ell}}(\b{x}, \b{y})) - 2 \sum_{i=1}^n w_i I_{\b{\alpha}}( K_{\b{\ell}}(\cdot, \b{x}_i)) + \sum_{i=1}^n \sum_{j=1}^n w_i w_j K_{\b{\ell}}(\b{x}_i, \b{x}_j) },
  \end{split}
\end{equation}
where the superscript in $I_{\b{\alpha}}^{\b{x}}$ indicates that integration is to be performed with respect to the dummy variable $\b{x}$ in \eqref{eq:wce-explicit}.
By means of the worst-case error, the integration error for any $f \in \mathcal{H}(K_{\b{\ell}})$ can be decomposed as follows:
\begin{equation} \label{eq:error-decoupling}
  \abs[0]{ I_{\b{\alpha}}(f) - Q_n(f) } \leq \norm[0]{f}_{ \b{\ell} } e_{\b{\alpha}, \b{\ell}} ( Q_n ).
\end{equation}

The only prior work containing bounds on the worst-case errors in this setting appears to be due to \citet{KuoWozniakowski2012} and \citet{KuoSloanWozniakowski2017}. When $d=1$ they consider the $n$-point Gauss--Hermite rule $Q_{\alpha,n}^\textsm{GH}$, which satisfies $Q_{\alpha,n}^\textsm{GH}(f) = I_\alpha(f)$ whenever~$f$ is a polynomial of degree at most $2n-1$, and prove that
\begin{equation*}
  e_{\alpha, \ell} ( Q_{\alpha,n}^\textsm{GH} ) \leq b_n \bigg( \frac{ \alpha^2 }{ \ell^2 } \bigg)^{n},
\end{equation*}
with $(b_n)_{n=1}^\infty$ a decreasing sequence converging to $2^{-1/4}$.
That is, the Gauss--Hermite rule converges with an exponential rate if $\ell > \alpha$.\footnote{Note that the matching lower bound claimed in \citet{KuoWozniakowski2012} is erroneous as pointed out by \citet[p.\@~830]{KuoSloanWozniakowski2017}.} Their potential non-convergence when~$\ell \leq \alpha$ is perhaps not surprising because these rules are not adapted to the RKHS, and in particular to the length-scale parameter.
Tensor product extensions for the multivariate case are also available, with similar constraints on $\alpha_i$ and $\ell_i$.
\citet{KarvonenSarkka2019} propose using certain scaled versions of Gauss--Hermite rules but are unable to prove the convergence of their rules, their error estimates being dependent on the sum of absolute values of the weights.
Approximation, measured in the $L^2$-norm corresponding to~\eqref{eq:integral}, is analysed in worst-case setting in \citet{FasshauerHickernell2012} and \citet{SloanWozniakowski2018} and in average-case setting in \citet{FasshauerHickernell2010} and \citet{ChenWang2019}.
Techniques similar to those used here have been used by \citet{Irrgeher2015,Irrgeher2016} and \citet{Dick2018} to analyse integration and approximation of functions in Hermite spaces whose reproducing kernels admit expansions in terms of Hermite polynomials.

\subsection{Contributions}

We develop kernel-dependent integration rules that are \emph{parameter-universal} in that they provably converge for all values of the variance and scale parameters $\b{\alpha}$ and $\b{\ell}$:
\begin{itemize}
\item In Section~\ref{sec:scaled-GH} we consider appropriately scaled versions of Gauss--Hermite rules and their tensor products. In the univariate case these rules, denoted $Q_{\alpha,\ell,n}^\textsm{GH}$, satisfy
  \begin{equation*}
    C_{1} \bigg( \frac{\alpha^2}{ 2(\alpha^2 + \ell^2)} \bigg)^n n^{1/4} \leq e_{\alpha,\ell}( Q_{\alpha, \ell, n}^\textsm{GH} ) \leq C_{2} \bigg( \frac{\alpha^2}{ \alpha^2 + \ell^2} \bigg)^n n^{-1/4}
  \end{equation*}
  for any $n \geq 1$ and certain positive constants $C_1$ and $C_2$, which shows that the rules enjoy exponential convergence for any values of $\alpha$ and $\ell$.
  See Theorem~\ref{thm:gh-1d} for details when $d=1$ and Theorem~\ref{thm:gh-tensor} and Corollary~\ref{cor:gh-tensor-isotropic} for the multivariate case.
  The rules are related to those developed by \citet{KarvonenSarkka2019}, but much simpler and more amenable to error analysis.
\item In Section~\ref{sec:kernel-quadrature} we consider potentially nested integration rules with worst-case optimal weights~\citep{Larkin1970,Oettershagen2017}, often known as kernel quadrature rules or, in statistical literature, Bayesian quadrature rules~\citep{Briol2019}.
  For a point set $X \subset \R$ such a rule is denoted $Q_{\alpha,\ell,X}^\textsm{opt}$.
  After decomposing the unbounded integration domain into bounded sub-domains and a ``tail domain'' we apply results from scattered data approximation literature~\citep{RiegerZwicknagl2010} to each of the bounded sub-domains and thereafter sum the individual errors.
  For a specific sequence $(X_k)_{k=1}^\infty$ of point sets, each containing $n = k(k+1)$ points, we compute
  \begin{equation*}
    e_{\alpha,\ell}( Q_{\alpha,\ell,X_k}^\textsm{opt} ) \leq C  \exp\bigg(\!-\frac{\sqrt{n}}{2\sqrt{2} \alpha^2} \bigg) 
  \end{equation*}
  for a positive constant $C$ and for any sufficiently large $k \geq 1$.
  The main results for $d=1$ are Proposition~\ref{prop:kernel-univariate-generic} and Theorem~\ref{thm:kq-univariate} while the multivariate case is contained in Theorem~\ref{thm:kq-tensor} and Corollary~\ref{cor:kq-tensor}.
The domain decomposition technique we use has been inspired by the method in \citet{Dick2018} and \citet[Chapter~6]{Suzuki2020}.
\end{itemize}

\subsection{Hilbert space of the Gaussian kernel} \label{sec:RKHS}

Before proceeding we review some facts about the Hilbert space $\mathcal{H}(K_{\b{\ell}})$.
Recall that any symmetric positive-definite kernel $K \colon \Omega \times \Omega \to \R$ on a set $\Omega$ induces a unique RKHS $\mathcal{H}(K)$ consisting of real-valued functions defined on $\Omega$ and equipped with an inner product $\inprod{\cdot}{\cdot}$ and the associated norm $\norm[0]{\cdot}$.
For any $x \in \Omega$, the function $K(\cdot,x)$ is in $\mathcal{H}(K)$ and the kernel has the reproducing property: $\inprod{f}{K(\cdot,x)} = f(x)$ for any $f \in \mathcal{H}(K)$ and $x \in \Omega$.

To describe the structure of the RKHS $\mathcal{H}(K_{\b{\ell}})$ of the Gaussian kernel~\eqref{eq:gauss-kernel} we make use of a simple orthonormal basis from \citet{Steinwart2006}, \citet{DeMarchiSchaback2009} and \citet{Minh2010}.
Consider first the case $d=1$ and define
\begin{equation} \label{eq:basis}
  \phi_{\ell,m}(x) \coloneqq \frac{1}{\ell^m \sqrt{m!}} \, x^m \exp\bigg(\! -\frac{x^2}{2\ell^2} \bigg).
\end{equation}
Then it follows~\citep{Minh2010} from the expansion $K_\ell(x,y) = \sum_{m=0}^\infty \phi_{\ell,m}(x) \phi_{\ell,m}(y)$ that $\{\phi_{\ell,m}\}_{m=0}^\infty$ is an orthonormal basis of $\mathcal{H}(K_\ell)$ and consequently
\begin{equation} \label{eq:gauss-RKHS}
  \mathcal{H}(K_\ell) = \Set[\bigg]{ f = \sum_{m=0}^\infty f_m \phi_{\ell,m} }{ \norm[0]{f}_\ell^2 \coloneqq \sum_{m=0}^\infty f_m^2 < \infty}.
\end{equation}
Because the multivariate Gaussian kernels~\eqref{eq:gauss-kernel} we consider are products of univariate kernels, the RKHS $\mathcal{H}(K_{\b{\ell}})$ of $d$-variate functions is the tensor product of the univariate spaces $\mathcal{H}(K_{\ell_i})$ for $i=1, \ldots, d$~\citep[p.\@~31]{BerlinetThomasAgnan2004}.
Moreover, the functions
\begin{equation} \label{eq:phi-d}
  \phi_{\b{\ell}, \b{m} }(\b{x}) \coloneqq \frac{1}{\b{\ell}^{\b{m}} \sqrt{\b{m}!}} \b{x}^{\b{m}} \prod_{i=1}^d \exp\bigg(\!- \frac{x_i^2}{2\ell_i^2} \bigg)
\end{equation}
for $\b{m} \in \N_0^d$ form an orthonormal basis of $\mathcal{H}(K_{\b{\ell}})$. Here $\N_0^d$ is the collection of $d$-dimensional non-negative multi-indices (and later $\N^d$ will be that of positive multi-indices), $\b{m}! \coloneqq m_1 ! \times \cdots \times m_d!$ and $\b{x}^{\b{m}} \coloneqq x_1^{m_1} \times \cdots \times x_d^{m_d}$ for $\b{m} \in \N_0^d$ and $\b{x} \in \R^d$.

The prior work~\citep{KuoWozniakowski2012,KuoSloanWozniakowski2017,KarvonenSarkka2019} on integration in the Gaussian RKHS is based on the Mercer basis functions
\begin{equation} \label{eq:mercer-basis}
    \varphi_{\alpha,\ell,m}(x) \coloneqq \sqrt{\frac{b_{\alpha,\ell}}{m!}} \exp(-c_{\alpha,\ell}^2 x^2) \mathrm{H}_m\bigg( \frac{b_{\alpha,\ell}x}{\alpha}\bigg),
\end{equation}
where $b_{\alpha,\ell}$ and $c_{\alpha,\ell}$ are certain constants
and $\mathrm{H}_m$ are the probabilists' Hermite polynomials, to be defined in~\eqref{eq:hermite}.
The functions $\varphi_{\alpha,\ell,m}$ have the $L^2$-orthonormality property
\begin{equation} \label{eq:mercer-L2}
    \frac{1}{\sqrt{2\pi} \alpha} \int_\R \varphi_{\alpha,\ell,p}(x) \varphi_{\alpha,\ell,q}(x) \exp\bigg(\!-\frac{x^2}{2\alpha^2} \bigg) \dif x = \delta_{pq}
\end{equation}
while $\{ \lambda_{\alpha,\ell,m}^{1/2} \varphi_{\alpha,\ell,m} \}_{m=0}^\infty$, for an exponentially decaying positive sequence $(\lambda_{\alpha,\ell,m})_{m=0}^\infty$, is an orthonormal basis of $\mathcal{H}(K_\ell)$ and $K_\ell(x,y) = \sum_{m=0}^\infty \lambda_{\alpha,\ell,m} \varphi_{\alpha,\ell,m}(x) \varphi_{\alpha,\ell,m}(y)$.
It seems to us that in many situations the simpler basis~\eqref{eq:basis} ought to be preferred over the Mercer basis~\eqref{eq:mercer-basis}.
As will become evident in Section~\ref{sec:scaled-GH}, the $L^2$-orthonormality~\eqref{eq:mercer-L2} of the Mercer basis is not necessary for analysing integration error.

\section{Scaled Gauss--Hermite rules} \label{sec:scaled-GH}

In this section we introduce an appropriate RKHS-dependent scaling for Gauss--Hermite rules and their tensor product extensions. The use of this scaling guarantees exponential convergence for all values of the variance and length-scale parameters $\b{\alpha}$ and $\b{\ell}$.

\subsection{Gauss--Hermite quadrature}

In one dimension, the $n$-point (generalised) Gauss--Hermite rule 
\begin{equation*}
  Q_{\alpha,n}^\textsm{GH}(f) \coloneqq \sum_{i=1}^n w_{n,i}^\textsm{GH} f( \alpha x_{n,i}^\textsm{GH})
\end{equation*}
approximates the integral $I_{\alpha}(f)$ and is uniquely characterised by the property that
\begin{equation*}
  Q_{\alpha,n}^\textsm{GH}(p) = I_\alpha(p) \quad \text{ for every polynomial $p$ of degree at most $2n-1$}.
\end{equation*}
Its points are obtained by scaling $x_{n,i}^\textsm{GH}$, the roots of the $n$th Hermite polynomial
\begin{equation} \label{eq:hermite}
  \mathrm{H}_n(x) \coloneqq (-1)^n \e^{x^2/2} \frac{\dif^{\,n}}{\dif x^n} \e^{-x^2/2},
\end{equation}
and the weights are
\begin{equation*}
  w_{n,i}^\textsm{GH} \coloneqq \frac{n!}{n^2 \mathrm{H}_{n-1}(x_{n,i}^\textsm{GH})}.
\end{equation*}
Note that $\sum_{i=1}^n w_{n,i}^\textsm{GH} = 1$ since the rule must be exact for constant functions. Furthermore, the point set is symmetric: for every $i \leq n$ there is $j \leq n$ such that $x_{n,i}^\textsm{GH} = -x_{n,j}^\textsm{GH}$.
In practice, the weights and points are computed with the Golub--Welsch algorithm that exploits the three-term recurrence relation of the Hermite polynomials~\citep[Section~3.1.1.1]{Gautschi2004}.
If $f$ has $2n$ continuous derivatives, then the error of the Gauss--Hermite quadrature is~\citep[Section~8.7]{Hildebrand1987}
\begin{equation} \label{eq:GH-error}
  I_{\alpha}(f) - Q_{\alpha,n}^\textsm{GH}(f) = f^{(2n)}({\xi}) \frac{\alpha^{2n} n!}{(2n)!} \quad \text{ for some } \quad \xi \in \R,
\end{equation}
which in particular implies that $Q_{\alpha,n}^\textsm{GH}$ underestimates the value of the integral if the $2n$th derivative of the integrand is everywhere positive.

\subsection{Integration of the orthonormal basis} \label{sec:scaled-GH-subsec}

Recall from Section~\ref{sec:RKHS} that
\begin{equation*}
  \phi_{\ell,m}(x)  = \frac{1}{\ell^m \sqrt{m!}} x^m \exp\bigg(\! -\frac{x^2}{2\ell^2} \bigg)
\end{equation*}
form an orthonormal basis of $\mathcal{H}(K_\ell)$. Denote
\begin{equation*}
  \psi_{\ell,m}(x) \coloneqq x^m \exp\bigg(\! -\frac{x^2}{2\ell^2} \bigg),
\end{equation*}
so that $\phi_{\ell,m} = (\ell^m \sqrt{m!})^{-1} \psi_{\ell,m}(x)$.
We now construct an $n$-point scaled Gauss--Hermite rule, $ Q_{\alpha,\ell,n}^\textsm{GH} $, such that
\begin{equation} \label{eq:SGH-conditions}
  Q_{\alpha,\ell,n}^\textsm{GH}(\phi_{\ell,m}) = I_{\alpha}(\phi_{\ell,m})
\end{equation}
for every $0 \leq m \leq 2n-1$.
Note that this is equivalent to $Q_{\alpha,\ell,n}^\textsm{GH}(\psi_{\ell,m}) = I_{\alpha}(\psi_{\ell,m})$ for every $0 \leq m \leq 2n-1$.
Let
\begin{equation*}
  \beta \coloneqq \sqrt{ \frac{\alpha^2\ell^2}{\alpha^2 + \ell^2}}.
\end{equation*}
Then each of the $2n$ exactness conditions~\eqref{eq:SGH-conditions} can be written as
\begin{equation} \label{eq:SGH-identity}
  \begin{split}
    Q_{\alpha,\ell,n}^\textsm{GH}(\psi_{\ell,m} ) = I_{\alpha}(\psi_{\ell,m}) &= \frac{1}{\sqrt{2\pi}\alpha} \int_\R x^m \exp\bigg(\! - \frac{x^2}{2\ell^2} \bigg) \exp\bigg(\! -\frac{x^2}{2\alpha^2} \bigg) \dif x \\
    &= \frac{\beta}{\alpha} \frac{1}{\sqrt{2\pi} \beta} \int_\R x^m \exp\bigg(\! - \frac{x^2}{2\beta^2} \bigg) \dif x \\
    &= \frac{\beta}{\alpha} I_{\beta}(x^m).
    \end{split}
\end{equation}
The desired quadrature rule can be thus realised as a scaled Gauss--Hermite rule for approximation of $I_{\beta}$:
\begin{equation*}
  Q_{\alpha,\ell,n}^\textsm{GH}(f) \coloneqq \frac{\beta}{\alpha} Q_{\beta,n}^\textsm{GH}(f_\textsm{exp}) \quad \text{ for } \quad f_\textsm{exp}(x) \coloneqq f(x) \exp\bigg(\frac{x^2}{2\ell^2} \bigg),
\end{equation*}
the exactness up to order $2n$ of which can be verified by observing that for $f = \psi_{\ell,m}$ we have $f_\textsm{exp}(x) = x^m$ and thus, by~\eqref{eq:SGH-identity},
\begin{equation*}
  Q_{\alpha,\ell,n}^\textsm{GH}(\psi_{\ell,m}) = \frac{\beta}{\alpha} Q_{\beta,n}^\textsm{GH}(x^m) = \frac{\beta}{\alpha} I_\beta(x^m) = I_\alpha(\psi_{\ell,m})
\end{equation*}
for every $0 \leq m \leq 2n-1$.
The scaled rule can be written as
\begin{equation*}
  Q_{\alpha,\ell,n}^\textsm{GH}(f) = \frac{\beta}{\alpha} \sum_{i=1}^n w_{n,i}^\textsm{GH} \exp\bigg( \frac{\beta^2 (x_{n,i}^\textsm{GH})^2}{2\ell^2} \bigg) f(\beta x_{n,i}^\textsm{GH}) = \sum_{i=1}^n w_{\alpha,\ell,n,i}^\textsm{GH} f(x_{\alpha,\ell,n,i}^\textsm{GH}),
\end{equation*}
the points and weights being
\begin{equation} \label{eq:scaled-GH-weights-point}
  x_{\alpha,\ell,n,i}^\textsm{GH} \coloneqq \beta x_{n,i}^\textsm{GH} \quad \text{ and } \quad w_{\alpha,\ell,n,i}^\textsm{GH} \coloneqq \frac{\beta}{\alpha} w_{n,i}^\textsm{GH} \exp\bigg( \frac{\beta^2 (x_{n,i}^\textsm{GH})^2}{2\ell^2} \bigg) > 0.
\end{equation}
This is an example of a generalised Gaussian quadrature rule, a quadrature rule that uses $n$ function evaluations to integrate exactly a collection of $2n$ functions~\citep{Barrow1978}.

\begin{remark} \label{remark:general-weights}
  Note that the above construction can be carried out for general weighted integration problems. Namely, consider the computation of $I^\nu(f) \coloneqq \int_a^b f(x) \nu(x) \dif x$ for $-\infty \leq a < b \leq \infty$ and a weight function $\nu \colon \Omega \to [0, \infty)$ that is sufficiently regular to guarantee~\citep[Section~1.1]{Gautschi2004} the existence of a Gaussian quadrature rule $Q_{n}^\nu(f) \coloneqq \sum_{i=1}^n w_{n,i}^\nu f(x_{n,i}^\nu)$ such that $Q_{n}^\nu(p) = I_\nu(p) < \infty$ for every polynomial $p$ of degree at most $2n-1$.
Define then $\bar{\nu}(x) \coloneqq \exp(-x^2/(2\ell^2)) \nu(x)$ and write
\begin{equation*}
  I^\nu(\psi_{\ell,m}) = \int_a^b x^m \exp\bigg(\!-\frac{x^2}{2\ell^2}\bigg) \nu(x) \dif x = I^{\bar{\nu}}(x^m).
\end{equation*}
It is easy to see that the quadrature rule
\begin{equation*}
  Q_{\ell,n}^\nu(f) \coloneqq \sum_{i=1}^n w_{n,i}^{\bar{\nu}} \exp\bigg(\frac{(x_{n,i}^{\bar{\nu}})^2}{2\ell^2}\bigg) f(x_{n,i}^{\bar{\nu}})
\end{equation*}
satisfies $Q_{\ell,n}^\nu(\phi_{\ell,m}) = I^\nu(\phi_{\ell,m})$ for $0 \leq m \leq 2n-1$.
The worst-case error can be bounded using the methods in Section~\ref{sec:convergence-gh}, but the bounds involve integrals of monic $\bar{\nu}$-orthogonal polynomials that appear difficult to estimate~\citep[Section~8.4]{Hildebrand1987}.
\end{remark}

\subsection{Error estimates in one dimension} \label{sec:convergence-gh}

This section establishes exponential upper and lower bounds on the worst-case error of the scaled Gauss--Hermite rule $Q_{\alpha,\ell,n}^\textsm{GH}$.
For these estimates recall Stirling's approximation
\begin{equation} \label{eq:stirling-asymptotic}
  n! \sim \sqrt{2\pi} n^{n+1/2} \e^{-n}.
\end{equation}
A version that is valid for finite $n$ is
\begin{equation} \label{eq:stirling-finite}
  \sqrt{2\pi} n^{n+1/2} \e^{-n} \leq n! \leq \e n^{n+1/2} \e^{-n}.
\end{equation}
These follow from the more precise bounds due to \citet{Robbins1955}.

\begin{lemma} \label{lemma:C} The sequence $(C_n)_{n=1}^\infty$ defined by
  \begin{equation} \label{eq:C}
    C_n \coloneqq \frac{2^n n!}{\sqrt{(2n)!}} \, n^{-1/4}
  \end{equation}
  is strictly decreasing and satisfies
\begin{equation*}
  \lim_{n \to \infty} C_{n} = \pi^{1/4} \quad \text{ and } \quad \pi^{1/4} < C_{n} \leq \frac{\e}{(2\pi)^{1/4}}.
\end{equation*}  
\end{lemma}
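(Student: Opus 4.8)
The plan is to handle the three assertions in order: strict monotonicity first, then the limit, and finally the two-sided bound, which will follow from the first two together with Stirling's approximation. For monotonicity I would work with the ratio $C_{n+1}/C_n$. Cancelling the factorials and the powers of $2$ gives
\[
  \frac{C_{n+1}}{C_n} = \frac{2(n+1)}{\sqrt{(2n+1)(2n+2)}} \left( \frac{n}{n+1} \right)^{1/4},
\]
and raising this to the fourth power to clear both the square root and the quarter-powers turns the desired inequality $C_{n+1} < C_n$ into $4n(n+1) < (2n+1)^2$, that is, $4n^2 + 4n < 4n^2 + 4n + 1$. Since this is just $0 < 1$, the sequence is strictly decreasing for every $n \ge 1$.

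For the limit I would insert the asymptotic form \eqref{eq:stirling-asymptotic} for both $n!$ and $(2n)!$ into the definition \eqref{eq:C}. Writing $(2n)! \sim \sqrt{2\pi}\, 2^{2n+1/2} n^{2n+1/2} \e^{-2n}$, the factors $\e^{-n}$ and the powers $n^{n}$ cancel between numerator and denominator, the residual $n^{1/4}$ cancels the $n^{-1/4}$ prefactor, and the surviving powers of $2$ and $\pi$ combine as $(2\pi)^{1/2}/[(2\pi)^{1/4} 2^{1/4}] = \pi^{1/4}$. Hence $C_n \to \pi^{1/4}$; the only care required here is the bookkeeping of the $2^{1/4}$ and $(2\pi)^{1/4}$ factors so that they collapse to this clean value.

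The lower bound is then immediate: a strictly decreasing sequence converging to $\pi^{1/4}$ has this value as its (unattained) infimum, so $C_n > \pi^{1/4}$ for all $n$. For the upper bound I would instead use the nonasymptotic estimates \eqref{eq:stirling-finite}, bounding $n! \le \e\, n^{n+1/2} \e^{-n}$ in the numerator and $(2n)! \ge \sqrt{2\pi}\,(2n)^{2n+1/2}\e^{-2n}$ in the denominator; the same cancellation of $n$-powers as in the limit computation leaves a constant $\e/(4\pi)^{1/4}$, which is in particular at most $\e/(2\pi)^{1/4}$ and holds uniformly in $n$. Because the sequence is already known to be decreasing, one could equally locate the maximum at $n=1$, where $C_1 = \sqrt{2} \le \e/(2\pi)^{1/4}$.

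I do not anticipate a genuine obstacle, since every step is elementary; the one point deserving attention is the monotonicity ratio, where the grouping $(2n+1)(2n+2)$ under the square root and the quarter-power $(n/(n+1))^{1/4}$ are easy to mishandle. Raising to the fourth power at the outset removes both and reduces the entire question to the transparent inequality $0 < 1$.
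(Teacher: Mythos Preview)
Your proof is correct and follows essentially the same route as the paper: compute the ratio of consecutive terms, clear the fractional exponents by taking fourth powers (the paper works with $C_n/C_{n+1}$ and arrives at the equivalent $(2n+1)^2/[4n(n+1)]>1$), then invoke \eqref{eq:stirling-asymptotic} for the limit and \eqref{eq:stirling-finite} for the upper bound. Your remark that one can alternatively read off the maximum as $C_1=\sqrt{2}$, and that the Stirling route actually yields the sharper constant $\e/(4\pi)^{1/4}$, are small bonuses not spelled out in the paper.
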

\begin{proof} Write
  \begin{equation*}
    \frac{C_n}{C_{n+1}} = \frac{\sqrt{(2n+1)(2n+2)}}{2(n+1)} \bigg( \frac{n+1}{n} \bigg)^{1/4} = \sqrt{\frac{2n+1}{2n+2}} \bigg( \frac{n+1}{n} \bigg)^{1/4}.
  \end{equation*}
  Because
  \begin{equation*}
    \bigg(\frac{2n+1}{2n+2}\bigg)^2 \frac{n+1}{n} = \bigg(1 - \frac{1}{2(n+1)} \bigg)^2 \frac{n+1}{n} = 1 + \frac{1}{4n(n+1)} > 1,
  \end{equation*}
  the sequence $(C_n)_{n=1}^\infty$ is strictly decreasing. Its limit is obtained from the asymptotic form~\eqref{eq:stirling-asymptotic} of Stirling's approximation and the upper bound $C_n \leq \e (2\pi)^{-1/4}$ from~\eqref{eq:stirling-finite}.
\end{proof}

\begin{lemma} \label{lemma:2n} For any $n \geq 1$ we have
  \begin{equation*}
    I_\alpha(\phi_{\ell,2n}) - Q_{\alpha,\ell,n}^\textsm{GH}(\phi_{\ell,2n}) = C_n \frac{\beta}{\alpha} \bigg( \frac{\beta^2}{2\ell^2} \bigg)^n n^{1/4},
  \end{equation*}
  where $C_n > 0$ is defined in~\eqref{eq:C}.
\end{lemma}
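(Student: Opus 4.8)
The plan is to reduce the stated error for the normalized basis function $\phi_{\ell,2n}$ to the error of the underlying (unscaled) Gauss--Hermite rule $Q_{\beta,n}^\textsm{GH}$ applied to the single monomial $x^{2n}$, for which the classical error formula~\eqref{eq:GH-error} yields an \emph{exact} value rather than a bound. This is what turns the claim into an equality.

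First I would use linearity together with the relation $\phi_{\ell,2n} = (\ell^{2n}\sqrt{(2n)!})^{-1}\psi_{\ell,2n}$ to write the quantity of interest as $(\ell^{2n}\sqrt{(2n)!})^{-1}$ times the error $I_\alpha(\psi_{\ell,2n}) - Q_{\alpha,\ell,n}^\textsm{GH}(\psi_{\ell,2n})$ for the unnormalized function $\psi_{\ell,2n}$. Next I would transport both terms to the $\beta$-integration problem: the identity~\eqref{eq:SGH-identity} gives $I_\alpha(\psi_{\ell,2n}) = (\beta/\alpha)\,I_\beta(x^{2n})$, while the definition of the scaled rule gives $Q_{\alpha,\ell,n}^\textsm{GH}(\psi_{\ell,2n}) = (\beta/\alpha)\,Q_{\beta,n}^\textsm{GH}(x^{2n})$, because $(\psi_{\ell,2n})_\textsm{exp}(x) = x^{2n}$. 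Hence the error factors as $(\beta/\alpha)\,[\,I_\beta(x^{2n}) - Q_{\beta,n}^\textsm{GH}(x^{2n})\,]$.

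The decisive step is to evaluate this residual exactly. Since $f(x)=x^{2n}$ has constant $2n$th derivative $f^{(2n)}\equiv (2n)!$, the dependence on the unknown point $\xi$ in the mean-value error expression~\eqref{eq:GH-error} disappears, and I obtain $I_\beta(x^{2n}) - Q_{\beta,n}^\textsm{GH}(x^{2n}) = (2n)!\cdot \beta^{2n} n!/(2n)! = \beta^{2n} n!$. There is no genuine analytic obstacle here; the only point to notice is that the remainder collapses to a closed form precisely because the monomial has degree exactly $2n$, one beyond the exactness degree $2n-1$ of the $n$-point rule.

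Finally I would assemble the pieces,
\begin{equation*}
  I_\alpha(\phi_{\ell,2n}) - Q_{\alpha,\ell,n}^\textsm{GH}(\phi_{\ell,2n}) = \frac{1}{\ell^{2n}\sqrt{(2n)!}}\cdot\frac{\beta}{\alpha}\,\beta^{2n} n! = \frac{\beta}{\alpha}\,\frac{\beta^{2n} n!}{\ell^{2n}\sqrt{(2n)!}},
\end{equation*}
and then carry out the cosmetic rearrangement matching the definition $C_n = 2^n n!\,\sqrt{(2n)!}^{\,-1} n^{-1/4}$: factoring out $(\beta^2/(2\ell^2))^n$ introduces a compensating $2^n$ in the numerator and a cancelling pair of $n^{\pm 1/4}$ factors, reproducing $C_n (\beta/\alpha)(\beta^2/(2\ell^2))^n n^{1/4}$. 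The only thing to watch is careful bookkeeping of the normalizing constants $\ell^{2n}$, $\sqrt{(2n)!}$, and the $2^n$ that enters and leaves; everything else is elementary.
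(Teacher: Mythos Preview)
Your proof is correct and follows essentially the same route as the paper: reduce to the $\beta$-integration of the monomial $x^{2n}$ via~\eqref{eq:SGH-identity} and the definition of the scaled rule, then use that $f^{(2n)}\equiv (2n)!$ in~\eqref{eq:GH-error} to obtain $I_\beta(x^{2n}) - Q_{\beta,n}^\textsm{GH}(x^{2n}) = \beta^{2n} n!$ exactly, and finally rewrite in terms of $C_n$. The paper's proof is slightly more compressed but the argument is identical.
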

\begin{proof}
  Because
  \begin{equation*}
    I_{\alpha}(\psi_{\ell,m}) = \frac{\beta}{\alpha} I_{\beta}(x^m) \quad \text{ and } \quad Q_{\alpha,\ell,n}^\textsm{GH}(\psi_{\ell,m}) = \frac{\beta}{\alpha} Q_{\beta,n}^\textsm{GH}(x^m)
  \end{equation*}
  for every $m \geq 0$, we can use the Gauss--Hermite error formula~\eqref{eq:GH-error} to deduce that
  \begin{equation*}
    \begin{split}
      I_{\alpha}(\phi_{\ell,2n}) - Q_{\alpha,\ell,n}^\textsm{GH}(\phi_{\ell,2n}) = \frac{\beta}{\alpha} \frac{1}{\ell^{2n} \sqrt{(2n)!}} [ I_{\beta}(x^{2n}) - Q_{\beta,n}^\textsm{GH}(x^{2n}) ] &= \frac{\beta}{\alpha} \frac{\beta^{2n} n!}{\ell^{2n} \sqrt{(2n)!}}\\
      &= C_n \frac{\beta}{\alpha} \bigg( \frac{\beta^2}{2\ell^2} \bigg)^n n^{1/4}.
      \end{split}
  \end{equation*}
\end{proof}

\begin{lemma} \label{lemma:>2n} For any $n \geq 1$ and $q \geq 0$ we have
  \begin{equation*}
    \abs[0]{ I_{\alpha}(\phi_{\ell,2q}) - Q_{\alpha,\ell,n}^\textsm{GH}(\phi_{\ell,2q}) } \leq  C_q^{-1}  \frac{\beta}{\alpha} \bigg( \frac{\beta^2}{\ell^2} \bigg)^q q^{-1/4},
  \end{equation*}
  where $C_q > 0$ is defined in~\eqref{eq:C}.
\end{lemma}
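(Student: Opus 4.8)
The plan is to reduce the statement to a bound on the ordinary Gauss--Hermite error for the even monomial $x^{2q}$, and then exploit the \emph{sign} of that error. First I would apply the two scaling identities $I_\alpha(\psi_{\ell,m}) = (\beta/\alpha)\,I_\beta(x^m)$ and $Q_{\alpha,\ell,n}^\textsm{GH}(\psi_{\ell,m}) = (\beta/\alpha)\,Q_{\beta,n}^\textsm{GH}(x^m)$, valid for every $m \geq 0$, together with the relation $\phi_{\ell,2q} = (\ell^{2q}\sqrt{(2q)!})^{-1}\psi_{\ell,2q}$, exactly as in the proof of Lemma~\ref{lemma:2n}. This rewrites the quantity of interest as
\[
  \abs[0]{I_\alpha(\phi_{\ell,2q}) - Q_{\alpha,\ell,n}^\textsm{GH}(\phi_{\ell,2q})} = \frac{\beta}{\alpha}\,\frac{1}{\ell^{2q}\sqrt{(2q)!}}\,\abs[0]{I_\beta(x^{2q}) - Q_{\beta,n}^\textsm{GH}(x^{2q})}.
\]

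Next I would identify the claimed upper bound with an explicit quantity. Using $C_q^{-1} = q^{1/4}\sqrt{(2q)!}\,/(2^q q!)$ from the definition~\eqref{eq:C} and the Gaussian moment formula $I_\beta(x^{2q}) = (2q)!\,\beta^{2q}/(2^q q!)$, the right-hand side of the lemma collapses to $(\beta/\alpha)(\ell^{2q}\sqrt{(2q)!})^{-1}\,I_\beta(x^{2q})$. Comparing with the display above, the assertion is therefore \emph{equivalent} to the single scalar inequality
\[
  \abs[0]{I_\beta(x^{2q}) - Q_{\beta,n}^\textsm{GH}(x^{2q})} \leq I_\beta(x^{2q}).
\]

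To prove this I would sandwich the quadrature value between $0$ and the true integral: $0 \leq Q_{\beta,n}^\textsm{GH}(x^{2q}) \leq I_\beta(x^{2q})$. The lower bound is immediate, since $x^{2q} \geq 0$ and the Gauss--Hermite weights $w_{n,i}^\textsm{GH}$ are positive. For the upper bound I would invoke the error formula~\eqref{eq:GH-error}: with $f(x) = x^{2q}$ one has $f^{(2n)}(x) = \frac{(2q)!}{(2q-2n)!}\,x^{2q-2n}$ when $q \geq n$, and the exponent $2q-2n$ is a nonnegative \emph{even} integer, so $f^{(2n)}(\xi) \geq 0$ for every $\xi \in \R$; hence $I_\beta(x^{2q}) - Q_{\beta,n}^\textsm{GH}(x^{2q}) \geq 0$, i.e.\ the rule underestimates even moments. (When $q < n$ we have $2q \leq 2n-2 < 2n-1$, so the rule integrates $x^{2q}$ exactly, the difference vanishes, and both inequalities hold trivially.) Given the sandwich, $\abs[0]{I_\beta(x^{2q}) - Q_{\beta,n}^\textsm{GH}(x^{2q})} = I_\beta(x^{2q}) - Q_{\beta,n}^\textsm{GH}(x^{2q}) \leq I_\beta(x^{2q})$, which is the required inequality.

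I do not anticipate a genuine obstacle: the only real insight is recognising that the stated bound is precisely the exact moment $I_\beta(x^{2q})$ after rescaling, whereupon everything reduces to positivity of the weights and the constant sign of the $2n$th derivative of an even monomial. The single point needing minor care is the degenerate index $q=0$, where the bound is vacuous because the error is zero, and more generally verifying that the entire range $q < n$ is absorbed by exactness of the rule.
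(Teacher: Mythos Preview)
Your proposal is correct and follows essentially the same approach as the paper: reduce via the scaling identities to $\abs[0]{I_\beta(x^{2q}) - Q_{\beta,n}^\textsm{GH}(x^{2q})}$, sandwich $0 \leq Q_{\beta,n}^\textsm{GH}(x^{2q}) \leq I_\beta(x^{2q})$ using positivity of the weights and the sign of $f^{(2n)}$ in~\eqref{eq:GH-error}, and then recognise the resulting bound $I_\beta(x^{2q})$ as the stated right-hand side via the Gaussian moment formula. The paper organises the computation slightly differently (it bounds first and identifies $C_q^{-1}$ at the end, whereas you first show the claim is equivalent to $\abs[0]{I_\beta(x^{2q}) - Q_{\beta,n}^\textsm{GH}(x^{2q})} \leq I_\beta(x^{2q})$), but the content is the same.
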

\begin{proof} 
  For $q < n$ the statement is trivial since the scaled Gauss--Hermite rule is exact for $\phi_{\ell,2q}$. For $q \geq n$ write
  \begin{equation*}
    \abs[0]{I_{\alpha}(\phi_{\ell,2q}) - Q_{\alpha,\ell,n}^\textsm{GH}(\phi_{\ell,2q})} = \frac{\beta}{\alpha} \frac{1}{\ell^{2q} \sqrt{(2q)!}} \abs[0]{ I_{\beta}(x^{2q}) - Q_{\beta,n}^\textsm{GH}(x^{2q}) }.
  \end{equation*}
  By the positivity of the Gauss--Hermite weights and~\eqref{eq:GH-error} we have, for some $\xi \in \R$,
  \begin{equation*}
    0 < Q_{\beta,n}^\textsm{GH}(x^{2q}) = I_{\beta}(x^{2q}) - \frac{\beta^{2n} n!}{(2n!)} \frac{(2q)!}{(2(q-n))!} \xi^{2(q-n)} \leq I_{\beta}(x^{2q}).
  \end{equation*}
  Therefore $\abs[0]{I_\beta(x^{2q}) - Q_{\beta,n}^\textsm{GH}(x^{2q}) } \leq I_\beta(x^{2q})$.
  The triangle inequality and the Gaussian moment formula then yield
  \begin{equation*}
    \begin{split}
      \abs[0]{I_{\alpha}(\phi_{\ell,2q}) - Q_{\alpha,\ell,n}^\textsm{GH}(\phi_{\ell,2q})} \leq \frac{\beta}{\alpha} \frac{1}{\ell^{2q} \sqrt{(2q)!}} I_{\beta}(x^{2q}) &=  \frac{\beta}{\alpha}  \frac{1}{\ell^{2q} \sqrt{(2q)!}} \frac{ \beta^{2q} (2q)!}{2^q q!} \\
      &=  \frac{\sqrt{(2q)!}}{2^q q!}  \frac{\beta}{\alpha} \bigg(\frac{\beta^2}{\ell^2} \bigg)^q \\
      &= C_q^{-1} \frac{\beta}{\alpha} \bigg(\frac{\beta^2}{\ell^2} \bigg)^q q^{-1/4}.
      \end{split}
  \end{equation*}
\end{proof}

\begin{theorem} \label{thm:gh-1d} For any $n \geq 1$ we have
  \begin{equation} \label{eq:gh-1d}
    C_n \frac{\ell}{\sqrt{\alpha^2+\ell^2}} \bigg( \frac{\alpha^2}{2(\alpha^2 + \ell^2)} \bigg)^n n^{1/4} \leq e_{\alpha,\ell}(Q_{\alpha,\ell,n}^\textsm{GH}) <  \pi^{-1/4}  \frac{\ell}{\sqrt{\alpha^2+\ell^2}} \bigg( \frac{\alpha^2}{\alpha^2 + \ell^2} \bigg)^n n^{-1/4},
  \end{equation}
  where $C_n > 0$ is defined in~\eqref{eq:C}.
\end{theorem}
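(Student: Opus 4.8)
The plan is to use the orthonormal basis $\{\phi_{\ell,m}\}_{m=0}^\infty$ of \eqref{eq:gauss-RKHS} to rewrite the worst-case error as a Parseval sum and then read off both inequalities from \cref{lemma:2n,lemma:>2n}. The Riesz representer of the error functional $I_\alpha - Q_{\alpha,\ell,n}^\textsm{GH}$ expands in this basis with coefficients $(I_\alpha - Q_{\alpha,\ell,n}^\textsm{GH})(\phi_{\ell,m})$, so that
\begin{equation*}
  e_{\alpha,\ell}(Q_{\alpha,\ell,n}^\textsm{GH})^2 = \sum_{m=0}^\infty \big[\, I_\alpha(\phi_{\ell,m}) - Q_{\alpha,\ell,n}^\textsm{GH}(\phi_{\ell,m}) \,\big]^2 .
\end{equation*}
I would first discard most of these terms. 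For odd $m$ the summand vanishes because $\phi_{\ell,m}$ is an odd function integrated by $I_\alpha$ against an even weight, while the scaled rule has points and weights symmetric about the origin by \eqref{eq:scaled-GH-weights-point}; for even $m=2q$ with $q \le n-1$ the summand vanishes by the exactness property \eqref{eq:SGH-conditions}. Hence only even indices $m=2q$ with $q \ge n$ survive and
\begin{equation*}
  e_{\alpha,\ell}(Q_{\alpha,\ell,n}^\textsm{GH})^2 = \sum_{q=n}^\infty \big[\, I_\alpha(\phi_{\ell,2q}) - Q_{\alpha,\ell,n}^\textsm{GH}(\phi_{\ell,2q}) \,\big]^2 .
\end{equation*}

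For the lower bound I would simply keep the single leading term $q=n$ and insert its exact value from \cref{lemma:2n}, giving
\begin{equation*}
  e_{\alpha,\ell}(Q_{\alpha,\ell,n}^\textsm{GH})^2 \ge \big[\, I_\alpha(\phi_{\ell,2n}) - Q_{\alpha,\ell,n}^\textsm{GH}(\phi_{\ell,2n}) \,\big]^2 = C_n^2 \frac{\beta^2}{\alpha^2} \bigg( \frac{\beta^2}{2\ell^2} \bigg)^{2n} n^{1/2} .
\end{equation*}
Taking square roots and rewriting the $\beta$-dependent factors through $\beta/\alpha = \ell/\sqrt{\alpha^2+\ell^2}$ and $\beta^2/(2\ell^2) = \alpha^2/[2(\alpha^2+\ell^2)]$ reproduces the left-hand inequality verbatim.

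The upper bound is where the real work lies. Applying \cref{lemma:>2n} to every surviving term and using $C_q > \pi^{1/4}$ from \cref{lemma:C} together with $q^{-1/2} \le n^{-1/2}$ for $q \ge n$ gives
\begin{equation*}
  e_{\alpha,\ell}(Q_{\alpha,\ell,n}^\textsm{GH})^2 \le \frac{\beta^2}{\alpha^2} \sum_{q=n}^\infty C_q^{-2} \bigg( \frac{\beta^2}{\ell^2} \bigg)^{2q} q^{-1/2} < \pi^{-1/2} \frac{\beta^2}{\alpha^2} n^{-1/2} \sum_{q=n}^\infty \bigg( \frac{\beta^2}{\ell^2} \bigg)^{2q} ,
\end{equation*}
and since $\beta^2/\ell^2 = \alpha^2/(\alpha^2+\ell^2) < 1$ the remaining series is a convergent geometric one. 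The main obstacle is sharpness: this direct summation leaves an extra factor $[1-(\beta^2/\ell^2)^2]^{-1/2}$, which is strictly larger than the clean prefactor $\ell/\sqrt{\alpha^2+\ell^2}$ in the statement, so recovering the exact constant $\pi^{-1/4}$ will require controlling the tail more tightly than the crude estimate $\lvert I_\alpha(\phi_{\ell,2q}) - Q_{\alpha,\ell,n}^\textsm{GH}(\phi_{\ell,2q}) \rvert \le I_\alpha(\phi_{\ell,2q})$ underlying \cref{lemma:>2n} permits — essentially one must exploit that the positive rule already captures part of the mass of $\phi_{\ell,2q}$ for $q>n$, or sum $\sum_q \binom{2q}{q} 4^{-q} (\beta^2/\ell^2)^{2q}$ in closed form via the generating function $(1-x)^{-1/2}$ and subtract the exactly integrated low-order block. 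Once the series is bounded, substituting $\beta^2/\ell^2 = \alpha^2/(\alpha^2+\ell^2)$ and $\beta/\alpha = \ell/\sqrt{\alpha^2+\ell^2}$ yields the right-hand inequality.
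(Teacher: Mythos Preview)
Your lower bound argument is identical to the paper's: both keep only the term $q=n$ (the paper phrases it as testing against the unit-norm function $\phi_{\ell,2n}$, which is the same thing) and invoke \cref{lemma:2n}.

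For the upper bound, your Parseval route and the paper's $\sup_f$ route are equivalent, and you have correctly located the only non-trivial issue. Write $r \coloneqq \beta^2/\ell^2 = \alpha^2/(\alpha^2+\ell^2)$. The paper bounds
\[
  \abs[0]{I_\alpha(f)-Q_{\alpha,\ell,n}^\textsm{GH}(f)} < \pi^{-1/4}\,\frac{\beta}{\alpha}\,r^{n}\,n^{-1/4}\sum_{q\ge 0}\abs[0]{f_{2(n+q)}}\,r^{q}
\]
via \cref{lemma:>2n} and \cref{lemma:C}, and then asserts that the supremum of the last sum over $\norm[0]{f}_\ell\le 1$ equals $1$, ``attained by $f=\phi_{\ell,2n}$''. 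That assertion is incorrect: by Cauchy--Schwarz the supremum of $\sum_{q\ge 0}\abs[0]{a_q}r^{q}$ subject to $\sum_q a_q^2\le 1$ is $(1-r^{2})^{-1/2}$, attained at $a_q\propto r^{q}$, not at $a_0=1$. Carrying the paper's own inequalities through correctly therefore yields exactly your bound,
\[
  e_{\alpha,\ell}(Q_{\alpha,\ell,n}^\textsm{GH}) < \pi^{-1/4}\,\frac{\beta}{\alpha}\,r^{n}\,n^{-1/4}\,(1-r^{2})^{-1/2}
  = \pi^{-1/4}\,\sqrt{\frac{\alpha^2+\ell^2}{2\alpha^2+\ell^2}}\;\bigg(\frac{\alpha^2}{\alpha^2+\ell^2}\bigg)^{n} n^{-1/4},
\]
whose prefactor is strictly larger than the $\ell/\sqrt{\alpha^2+\ell^2}$ printed in~\eqref{eq:gh-1d}. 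Your generating-function idea does not close the gap either: since $C_q^{-2}q^{-1/2}=\binom{2q}{q}4^{-q}$ and $\sum_{q\ge 0}\binom{2q}{q}4^{-q}r^{2q}=(1-r^2)^{-1/2}$, the tail from $q=n$ still carries a $(1-r^2)^{-1}$ in $e^2$.

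In short, your proposal is not missing an idea that the paper supplies; rather, you have honestly flagged a constant that the paper's argument does not actually deliver. The exponential rate and the $n^{-1/4}$ are fine; only the $\alpha,\ell$-dependent prefactor in the upper bound of~\eqref{eq:gh-1d} is unsupported by the proof as written.
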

\begin{proof} 
  Since $\norm[0]{\phi_{\ell,m}}_{\ell} = 1$ for every $m \geq 0$, the lower bound follows immediately from Lemma~\ref{lemma:2n}.
  Let $f = \sum_{m=0}^\infty f_m \phi_{\ell,m} \in \rkhs(K_\ell)$. Because $Q_{\alpha,\ell,n}^\textsm{GH}$ is exact for $\phi_{\ell,0}, \ldots, \phi_{\ell,2n-1}$ and $Q_{\alpha,\ell,n}^\textsm{GH}(\phi_{\ell,m}) = I_{\alpha}(\phi_{\ell,m}) = 0$ if $m$ is odd, 
  \begin{equation*}
    \begin{split}
      \abs[0]{I_{\alpha}(f) - Q_{\alpha,\ell,n}^\textsm{GH}(f)} &\leq \sum_{m=0}^\infty \abs[0]{f_m} \abs[0]{ I_{\alpha}(\phi_{\ell,m}) - Q_{\alpha,\ell,n}^\textsm{GH}(\phi_{\ell,m})} \\
      &= \sum_{q=n}^\infty \abs[0]{f_{2q}} \abs[0]{ I_{\alpha}(\phi_{\ell,2q}) - Q_{\alpha,\ell,n}^\textsm{GH}(\phi_{\ell,2q})}.
      \end{split}
  \end{equation*}
  Recall from Lemma~\ref{lemma:C} that $C_q^{-1} < \pi^{-1/4}$.
  Lemma~\ref{lemma:>2n} thus yields
  \begin{equation*}
    \begin{split}
    \abs[0]{I_{\alpha}(f) - Q_{\alpha,\ell,n}^\textsm{GH}(f)} &\leq \frac{\beta}{\alpha}  \sum_{q=n}^\infty \abs[0]{f_{2q}} C_q^{-1} \bigg( \frac{\beta^2}{\ell^2} \bigg)^q q^{-1/4} \\
    &= \frac{\beta}{\alpha}  \bigg( \frac{\beta^2}{\ell^2} \bigg)^{n} \sum_{q=0}^\infty \abs[0]{f_{2(n+q)}} C_{n+q}^{-1} \bigg( \frac{\beta^2}{\ell^2} \bigg)^q (n+q)^{-1/4} \\
    &< \pi^{-1/4} \frac{\beta}{\alpha} \bigg( \frac{\beta^2}{\ell^2} \bigg)^{n} n^{-1/4} \sum_{q=0}^\infty \abs[0]{f_{2(n+q)}} \bigg( \frac{\beta^2}{\ell^2} \bigg)^q.
    \end{split}
  \end{equation*}
  We conclude that
  \begin{equation*}
    \begin{split}
    \sup_{ \norm[0]{f}_{\ell} \leq 1} \abs[0]{I_{\alpha}(f) - Q_{\alpha,\ell,n}^\textsm{GH}(f)} &<  \pi^{-1/4} \frac{\beta}{\alpha} \bigg( \frac{\beta^2}{\ell^2} \bigg)^{n} n^{-1/4} \sup_{ \norm[0]{f}_\ell \leq 1} \, \sum_{q=0}^\infty \abs[0]{f_{2(n+q)}} \bigg( \frac{\beta^2}{\ell^2} \bigg)^q \\
    &=  \pi^{-1/4}  \frac{\beta}{\alpha} \bigg( \frac{\beta^2}{\ell^2} \bigg)^{n} n^{-1/4}
    \end{split}
  \end{equation*}
  because $\beta^2 < \ell^2$ and the supremum is thus attained by $f = \phi_{\ell,2n}$, hich corresponds  to $f_{2(n+q)} = 1$ for $q=0$ and $f_{2(n+q)} = 0$ for $q > 0$.
\end{proof}

\begin{figure}[t!]
  \centering
  \includegraphics{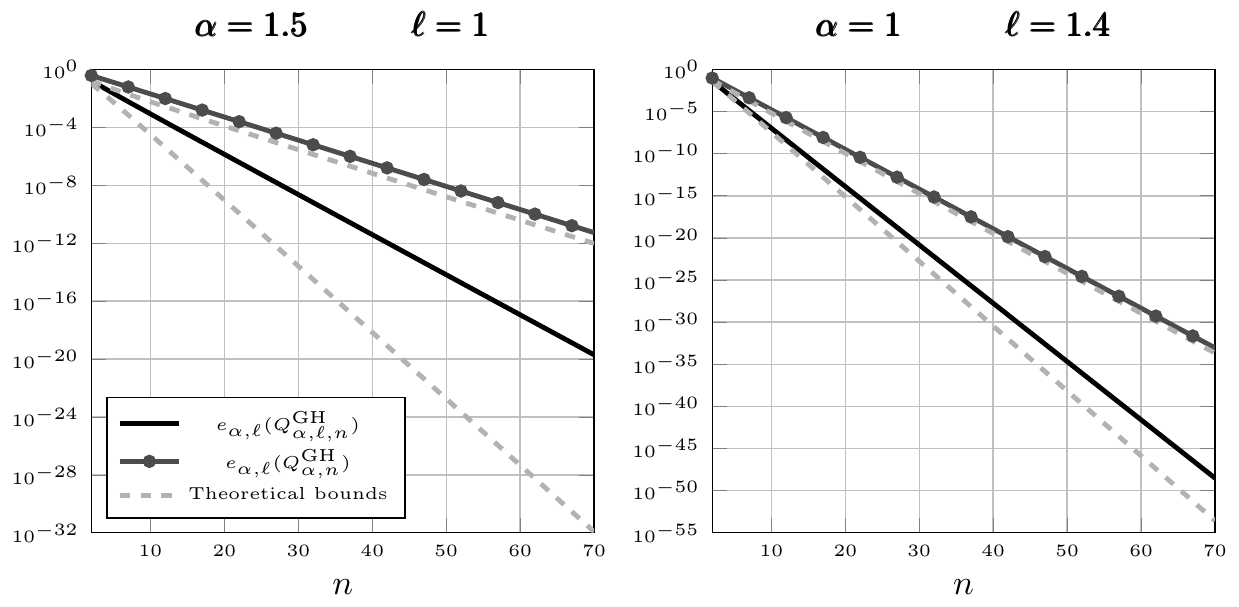}
  \caption{Worst-case errors of the scaled Gauss--Hermite rule $Q_{\alpha,\ell,n}^\textsm{GH}$ and the standard Gauss--Hermite rule $Q_{\alpha,n}^\textsm{GH}$ and our theoretical bounds~\eqref{eq:gh-1d} for $e_{\alpha,\ell}(Q_{\alpha,\ell,n}^\textsm{GH})$. 
  Note that the earlier results for the standard Gauss--Hermite rule in \citet{KuoWozniakowski2012} and \citet{KuoSloanWozniakowski2017} do not apply in the left panel because $\alpha > \ell$. 
  Rates of decay computed from the numerical results for scaled Gauss--Hermite rules are $r^n$ for $r\approx 0.528$ (left panel) and $r^n$ for $r\approx 0.203$ (right panel).
  Interestingly, the upper bound in~\eqref{eq:gh-1d} appears to be almost exact for the \emph{standard} Gauss--Hermite rule. All computations were implemented in Python with 100-digit precision.
} \label{fig:wce-gh-scaled}
\end{figure}

\begin{remark} \label{remark:gh-suboptimality} The exponential difference in the upper and lower bounds of Theorem~\ref{thm:gh-1d} partially stems from the rough estimate in Lemma~\ref{lemma:>2n}, which is merely double the integral of $\phi_{\ell,2q}$ (indeed, this estimate only depends on $q$, not on $n$). A more careful analysis of the Gauss--Hermite error for even polynomials, $\abs[0]{I_\beta(x^{2q}) - Q_{\beta,n}^\textsm{GH}(x^{2q})}$, could be expected to yield improvements. See Figure~\ref{fig:wce-gh-scaled} for numerical results.
\end{remark}

\citet{KuoWozniakowski2012} and \citet{KuoSloanWozniakowski2017} analyse integration in $\mathcal{H}(K_{\ell})$ under the constraint that $\ell > \alpha$.
Theorem~4.1 in \citet{KuoSloanWozniakowski2017} contains a lower bound for the $n$th minimal worst-case error
\begin{equation} \label{eq:nth-minimal}
  e_{\alpha,\ell,n}^\textsm{min} \coloneqq \inf_{ Q_n } e_{\alpha,\ell}(Q_n),
\end{equation}
where the infimum is taken over all $n$-point quadrature rules $Q_n$.
A careful reading reveals that the assumption $\ell > \alpha$ is not required in the proof of the lower bound.
Let
\begin{equation} \label{eq:gamma-omega}
  \gamma \coloneqq \frac{\alpha}{\ell} \quad \text{ and } \quad \omega_\gamma \coloneqq \frac{2\gamma^2}{1+2\gamma^2 + \sqrt{1+4\gamma^2}} < 1.
\end{equation}
Then a generalisation of Theorem~4.1 in \citet{KuoSloanWozniakowski2017} states that
\begin{equation} \label{eq:kuo-lower}
  e_{\alpha,\ell,n}^\textsm{min} \geq \sqrt{ \frac{2(1+4\gamma^2)^{1/4}}{(1+2\gamma^2+\sqrt{1+4\gamma^2})\e} } \, \frac{\omega_\gamma^n n!}{(n+1) (2n)!}.
\end{equation}
Because $\omega_\ell < 1$ and
\begin{equation} \label{eq:stirling-for-minimal}
  \frac{n!}{(2n)!} \sim \frac{\e^n}{2^{2n+1/2} n^{n} }, \quad \text{ which for finite $n$ is } \quad \frac{n!}{(2n)!} \geq \sqrt{\pi} \frac{ \e^{n-1}}{ 2^{2n} n^{n} },
\end{equation}
this lower bound is super-exponential and likely non-strict; see p.\@~847 in \citet{KuoSloanWozniakowski2017} for more discussion.
By combining the lower bound~\eqref{eq:kuo-lower}, the Stirling estimates~\eqref{eq:stirling-for-minimal} and the upper bound from Theorem~\ref{thm:gh-1d},  we obtain the first (at least) exponential bounds on the $n$th minimal error for all values of $\alpha$ and $\ell$ in this setting.

\begin{theorem} \label{thm:nth-minimal-1d}
  For any $n \geq 1$ the $n$th minimal error~\eqref{eq:nth-minimal} satisfies
  \begin{equation*}
    \bar{C}_{n}(\gamma) \bigg( \frac{\omega_\gamma \e}{4n} \bigg)^n (n+1)^{-1} \leq e_{\alpha,\ell,n}^\textsm{min} <  \pi^{-1/4}  \frac{\ell}{\sqrt{\alpha^2 + \ell^2}} \bigg( \frac{\alpha^2}{\alpha^2+\ell^2} \bigg)^n n^{-1/4},
  \end{equation*}
  where $\gamma$ and $\omega_\gamma$ are defined in~\eqref{eq:gamma-omega} and
  \begin{equation} \label{eq:C-bar-gamma}
    \bar{C}_n(\gamma) \coloneqq \sqrt{ \frac{2(1+4\gamma^2)^{1/4}}{(1+2\gamma^2+\sqrt{1+4\gamma^2})\e} } \, \frac{n!}{(2n)!} \bigg( \frac{\e}{4n} \bigg)^{-n}
  \end{equation}
  is a positive sequence such that
  \begin{equation*}
    \lim_{n \to \infty} \bar{C}_{n}(\gamma) = \sqrt{ \frac{(1+4\gamma^2)^{1/4}}{(1+2\gamma^2+\sqrt{1+4\gamma^2}\,)\e} } \quad \text{and} \quad \bar{C}_n(\gamma) \geq \sqrt{ \frac{2\pi(1+4\gamma^2)^{1/4}}{(1+2\gamma^2+\sqrt{1+4\gamma^2})\e^3} }.
  \end{equation*}  
\end{theorem}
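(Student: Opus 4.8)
The plan is to assemble three ingredients already present in the excerpt: the upper bound on $e_{\alpha,\ell}(Q_{\alpha,\ell,n}^\textsm{GH})$ from Theorem~\ref{thm:gh-1d}, the generalised lower bound~\eqref{eq:kuo-lower} on the minimal error, and the two Stirling estimates collected in~\eqref{eq:stirling-for-minimal}. For the upper bound I would simply note that, since the infimum in~\eqref{eq:nth-minimal} ranges over all $n$-point rules, $e_{\alpha,\ell,n}^\textsm{min} \leq e_{\alpha,\ell}(Q_{\alpha,\ell,n}^\textsm{GH})$, so the right-hand inequality is inherited verbatim from Theorem~\ref{thm:gh-1d}.

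For the lower bound, the key observation is that it is not really an inequality but an algebraic reformulation of~\eqref{eq:kuo-lower}. Abbreviating the constant under the square root in~\eqref{eq:kuo-lower} by $A \coloneqq 2(1+4\gamma^2)^{1/4}\big/\big[(1+2\gamma^2+\sqrt{1+4\gamma^2})\e\big]$ and recalling from~\eqref{eq:C-bar-gamma} that $\bar{C}_n(\gamma) = \sqrt{A}\,\frac{n!}{(2n)!}(\e/(4n))^{-n}$, I would form the product $\bar{C}_n(\gamma)(\omega_\gamma \e/(4n))^n (n+1)^{-1}$ and observe that the factors $(\e/(4n))^{\pm n}$ cancel exactly, leaving $\sqrt{A}\,\omega_\gamma^n n!/[(n+1)(2n)!]$, which is precisely the right-hand side of~\eqref{eq:kuo-lower}. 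Hence the claimed lower bound holds as an identity.

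It then remains to verify the three asserted properties of $\bar{C}_n(\gamma)$. Positivity is immediate from the definition. For the limit I would insert the asymptotic form $n!/(2n)! \sim \e^n/(2^{2n+1/2}n^n)$ into $\bar{C}_n(\gamma) = \sqrt{A}\,\frac{n!}{(2n)!}(4n/\e)^n$: the factors $\e^n$ and $n^n$ cancel against their counterparts in $(4n/\e)^n$, while $4^n/2^{2n+1/2}$ collapses to $2^{-1/2}$, giving $\lim_{n\to\infty}\bar{C}_n(\gamma) = \sqrt{A/2}$, which is exactly the stated limit once $A$ is substituted back. For the uniform lower bound I would run the identical cancellation with the finite estimate $n!/(2n)! \geq \sqrt{\pi}\,\e^{n-1}/(2^{2n}n^n)$ in place of the asymptotic one, obtaining $\bar{C}_n(\gamma) \geq \sqrt{\pi A}/\e = \sqrt{\pi A/\e^2}$, which matches the claimed bound after expanding $A$.

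The only real obstacle is bookkeeping: one must track the powers of $\e$, $2$, $4$ and $n$ with care to confirm that the seemingly odd factor $(\e/(4n))^{-n}$ baked into $\bar{C}_n(\gamma)$ is exactly what renders both the limit and the lower bound independent of $n$. No genuinely new idea is required; the content is entirely in the reformulation of~\eqref{eq:kuo-lower} and the two Stirling substitutions.
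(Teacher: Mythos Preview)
Your proposal is correct and follows exactly the approach the paper sketches in the paragraph preceding the theorem: the upper bound is inherited from Theorem~\ref{thm:gh-1d} via $e_{\alpha,\ell,n}^\textsm{min} \leq e_{\alpha,\ell}(Q_{\alpha,\ell,n}^\textsm{GH})$, the lower bound is a pure rewriting of~\eqref{eq:kuo-lower} via the definition of $\bar{C}_n(\gamma)$, and the limit and uniform lower bound for $\bar{C}_n(\gamma)$ come from the asymptotic and finite Stirling estimates in~\eqref{eq:stirling-for-minimal}, respectively. Your bookkeeping on the powers of $\e$, $2$, $4$ and $n$ is accurate throughout.
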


\subsection{Error estimates for tensor product rules}

Let $\b{n} \coloneqq (n_1, \cdots, n_d) \in \N^d$. We now consider the tensor product extensions
\begin{equation} \label{eq:scaled-GH-tensor}
  Q_{\b{\alpha}, \b{\ell}, \b{n}}^\textsm{GH} \coloneqq Q_{\alpha_1, \ell_1, n_1}^\textsm{GH} \times \cdots \times Q_{\alpha_d, \ell_d, n_d}^\textsm{GH}
\end{equation}
of the scaled Gauss--Hermite rules defined in Section~\ref{sec:scaled-GH-subsec}. The approximation to $I_{\b{\alpha}}(f)$ for $f \colon \R^d \to \R$ is thus
\begin{equation*}
  Q_{\b{\alpha}, \b{\ell}, \b{n}}^\textsm{GH}(f) = \sum_{ \b{i} \in \N^d, \, \b{i} \leq \b{n} } w_{\b{\alpha}, \b{\ell}, \b{n}, \b{i}}^\textsm{GH} f( \b{x}_{\b{\alpha}, \b{\ell}, \b{n}, \b{i}}^\textsm{GH} ),
\end{equation*}
where $\b{i} \leq \b{n}$ stands for $i_j \leq n_j$ for every $j=1,\ldots,d$ and the points and weights are defined using the univariate versions in~\eqref{eq:scaled-GH-weights-point} as follows:
\begin{equation*}
  \b{x}_{\b{\alpha}, \b{\ell}, \b{n}, \b{i}}^\textsm{GH} \coloneqq (x_{\alpha_1, \ell_1, n_1, i_1}^\textsm{GH}, \ldots, x_{\alpha_d, \ell_d, n_d, i_d}^\textsm{GH} )  \quad \text{ and } \quad w_{\b{\alpha}, \b{\ell}, \b{n}, \b{i}}^\textsm{GH} \coloneqq \prod_{j=1}^d w_{\alpha_j, \ell_j, n_j, i_j}^\textsm{GH}.
\end{equation*}
Recall from Section~\ref{sec:introduction} that there exist representers $\mathcal{I}_{\b{\alpha},\b{\ell}}$ and $\mathcal{Q}_{\b{\alpha}, \b{\ell}, \b{n}}^\textsm{GH}$ in $\mathcal{H}(K_{\b{\ell}})$ such that
\begin{equation*}
  I_{\b{\alpha}}(f) = \inprod{f}{\mathcal{I}_{\b{\alpha},\b{\ell}}}_{\b{\ell}} \quad \text{ and } \quad Q_{\b{\alpha}, \b{\ell}, \b{n}}^\textsm{GH}(f) = \inprod{f}{\mathcal{Q}_{\b{\alpha}, \b{\ell}, \b{n}}^\textsm{GH}}_{\b{\ell}}
\end{equation*}
for every $f \in \mathcal{H}(K_{\b{\ell}})$ and that
\begin{equation} \label{eq:wce-representer}
  e_{\b{\alpha}, \b{\ell}}( Q_{\b{\alpha}, \b{\ell}, \b{n}}^\textsm{GH} ) = \norm[0]{ I_{\b{\alpha},\b{\ell}} - \mathcal{Q}_{\b{\alpha}, \b{\ell}, \b{n}}^\textsm{GH} }_{\b{\ell}}.
\end{equation}
Furthermore, the representers have the explicit forms
\begin{equation*}
  \begin{split}
    \mathcal{I}_{\b{\alpha},\b{\ell}}(\b{x}) = \prod_{i=1}^d \mathcal{I}_{\alpha_i, \ell_i}(x_i) = \prod_{i=1}^d \Bigg[ \frac{1}{\sqrt{2\pi} \alpha_i} \int_{\R} K_{\ell_i}(y_i, x_i) \exp\bigg(\! -\frac{y_i^2}{2\alpha_i^2} \bigg) \dif y_i \Bigg] 
    \end{split}
\end{equation*}
and
\begin{equation*}
  \begin{split}
    \mathcal{Q}_{\b{\alpha}, \b{\ell}, \b{n}}^\textsm{GH}(\b{x}) = \prod_{i=1}^d \mathcal{Q}_{\alpha_i, \ell_i, n_i}^\textsm{GH}(x_i) = \prod_{i=1}^d \Bigg[ \sum_{j=1}^{n_i} w_{\alpha_i,\ell_i,n_i,j}^\textsm{GH} K_{\ell_i}\big( x_{\alpha_i,\ell_i,n_i,j}^\textsm{GH}, x_i \big) \Bigg]
    \end{split}
\end{equation*}
for $\b{x} \in \R^d$.

\begin{lemma} \label{lemma:representers}
  For any $\alpha, \ell > 0$ and $n \geq 1$ we have
  \begin{equation*}
    \norm[0]{\mathcal{Q}_{\alpha, \ell, n}^\textsm{GH}}_{\ell} \leq \norm[0]{\mathcal{I}_{\alpha,\ell}}_{\ell} = \bigg( 1 + \frac{2\alpha^2}{\ell^2} \bigg)^{-1/4} < 1.
  \end{equation*}
\end{lemma}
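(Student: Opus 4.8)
The plan is to work entirely in the orthonormal basis $\{\phi_{\ell,m}\}_{m=0}^\infty$ of $\mathcal{H}(K_\ell)$ recorded in~\eqref{eq:gauss-RKHS} and to compare the two representers coefficient by coefficient. Expanding $\mathcal{I}_{\alpha,\ell} = \sum_{m=0}^\infty c_m \phi_{\ell,m}$ and $\mathcal{Q}_{\alpha,\ell,n}^\textsm{GH} = \sum_{m=0}^\infty d_m \phi_{\ell,m}$, orthonormality together with the defining properties of the representers gives $c_m = \inprod{\phi_{\ell,m}}{\mathcal{I}_{\alpha,\ell}}_\ell = I_\alpha(\phi_{\ell,m})$ and $d_m = \inprod{\phi_{\ell,m}}{\mathcal{Q}_{\alpha,\ell,n}^\textsm{GH}}_\ell = Q_{\alpha,\ell,n}^\textsm{GH}(\phi_{\ell,m})$, so by the norm formula in~\eqref{eq:gauss-RKHS} we have $\norm[0]{\mathcal{I}_{\alpha,\ell}}_\ell^2 = \sum_m c_m^2$ and $\norm[0]{\mathcal{Q}_{\alpha,\ell,n}^\textsm{GH}}_\ell^2 = \sum_m d_m^2$. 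The whole statement then reduces to understanding these two scalar sequences.

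First I would record the coefficients explicitly. Both $I_\alpha$ and $Q_{\alpha,\ell,n}^\textsm{GH}$ annihilate $\phi_{\ell,m}$ for odd $m$, since the Gaussian weight and the symmetric quadrature point set make all odd moments vanish; hence $c_m = d_m = 0$ for odd $m$. For even $m = 2q$ the identities $I_\alpha(\psi_{\ell,m}) = (\beta/\alpha) I_\beta(x^m)$ and $Q_{\alpha,\ell,n}^\textsm{GH}(\psi_{\ell,m}) = (\beta/\alpha) Q_{\beta,n}^\textsm{GH}(x^m)$ from Section~\ref{sec:scaled-GH-subsec}, combined with $\phi_{\ell,2q} = (\ell^{2q}\sqrt{(2q)!})^{-1}\psi_{\ell,2q}$, give
\[ c_{2q} = \frac{\beta}{\alpha}\frac{I_\beta(x^{2q})}{\ell^{2q}\sqrt{(2q)!}} \quad\text{and}\quad d_{2q} = \frac{\beta}{\alpha}\frac{Q_{\beta,n}^\textsm{GH}(x^{2q})}{\ell^{2q}\sqrt{(2q)!}}. \]
The two coefficients thus have identical form and differ only through $I_\beta(x^{2q})$ versus $Q_{\beta,n}^\textsm{GH}(x^{2q})$. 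The inequality then falls out of the chain $0 \leq Q_{\beta,n}^\textsm{GH}(x^{2q}) \leq I_\beta(x^{2q})$ already established inside the proof of Lemma~\ref{lemma:>2n} (positivity of the Gauss--Hermite weights yields the lower bound, while~\eqref{eq:GH-error} applied to $x^{2q}$, whose $2n$th derivative is nonnegative, yields the upper bound): it forces $0 \leq d_{2q} \leq c_{2q}$, hence $d_m^2 \leq c_m^2$ for every $m$, and summing gives $\norm[0]{\mathcal{Q}_{\alpha,\ell,n}^\textsm{GH}}_\ell^2 \leq \norm[0]{\mathcal{I}_{\alpha,\ell}}_\ell^2$.

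For the closed form I would evaluate $\sum_q c_{2q}^2$ directly. The Gaussian moment formula $I_\beta(x^{2q}) = \beta^{2q}(2q)!/(2^q q!)$ turns $c_{2q}^2$ into $(\beta^2/\alpha^2)\binom{2q}{q}\beta^{4q}/(4^q \ell^{4q})$, and summing the generating series $\sum_{q} \binom{2q}{q} z^q = (1-4z)^{-1/2}$ at $z = \beta^4/(4\ell^4) < 1/4$ (legitimate because $\beta^2/\ell^2 = \alpha^2/(\alpha^2+\ell^2) < 1$) leaves $\norm[0]{\mathcal{I}_{\alpha,\ell}}_\ell^2 = (\beta^2/\alpha^2)(1-\beta^4/\ell^4)^{-1/2}$. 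Substituting $\beta^2 = \alpha^2\ell^2/(\alpha^2+\ell^2)$ and simplifying reduces this to $\ell/\sqrt{2\alpha^2+\ell^2} = (1+2\alpha^2/\ell^2)^{-1/2}$; taking the square root yields the claimed equality, and the strict bound $<1$ is immediate from $2\alpha^2/\ell^2 > 0$.

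I do not expect a genuine obstacle here once the orthonormal-basis viewpoint is adopted: the single substantive observation is that the two representers have coefficients of exactly the same shape, differing only in ``$I_\beta$ versus $Q_{\beta,n}^\textsm{GH}$ applied to even monomials,'' after which the already-proved under-integration bound $Q_{\beta,n}^\textsm{GH}(x^{2q}) \leq I_\beta(x^{2q})$ does all the work. The remaining effort is purely bookkeeping --- carrying the $\beta/\alpha$ and $(\ell^{2q}\sqrt{(2q)!})^{-1}$ factors faithfully and checking that the generating-function summation lies within its radius of convergence.
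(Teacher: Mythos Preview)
Your proof is correct and takes a genuinely different route from the paper's. The paper computes $\norm[0]{\mathcal{I}_{\alpha,\ell}}_\ell$ by evaluating the representer~\eqref{eq:integral-representer-explicit} as a Gaussian integral and then integrating once more; for $\norm[0]{\mathcal{Q}_{\alpha,\ell,n}^\textsm{GH}}_\ell$ it expands the double kernel sum, recognises the inner sum as a Gauss--Hermite approximation to $I_1$ of an exponential, bounds it via~\eqref{eq:GH-error}, and then repeats this trick on the outer sum, followed by algebraic simplification. You instead work coefficient-wise in the orthonormal basis $\{\phi_{\ell,m}\}$, observe that both representers have coefficients of the same shape differing only in $I_\beta(x^{2q})$ versus $Q_{\beta,n}^\textsm{GH}(x^{2q})$, and then invoke the under-integration inequality $0 \le Q_{\beta,n}^\textsm{GH}(x^{2q}) \le I_\beta(x^{2q})$ already established in the proof of Lemma~\ref{lemma:>2n}; the closed form drops out of the central-binomial generating function. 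Your argument is more economical in that it recycles an earlier lemma and avoids the nested Gauss--Hermite estimation, while the paper's approach is more self-contained and does not pass through the basis expansion. Both are clean; yours has the slight advantage that the key inequality $d_{2q}\le c_{2q}$ makes the structural reason for $\norm[0]{\mathcal{Q}_{\alpha,\ell,n}^\textsm{GH}}_\ell \le \norm[0]{\mathcal{I}_{\alpha,\ell}}_\ell$ very transparent.
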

\begin{proof} Recall from Section~\ref{sec:introduction} that $\norm[0]{\mathcal{I}_{\alpha,\ell}}_{\ell}^2 = I_{\alpha}(\mathcal{I}_{\alpha,\ell})$ and $\norm[0]{\mathcal{Q}_{\alpha,\ell,n}^\textsm{GH}}_{\ell}^2 = Q_{\alpha,\ell,n}^\textsm{GH}(\mathcal{Q}_{\alpha\ell,n}^\textsm{GH})$.
  It is then fairly straightforward to compute that
  \begin{equation} \label{eq:integral-representer-explicit}
    \mathcal{I}_{\alpha,\ell}(x) = \frac{1}{\sqrt{2\pi} \alpha} \int_\R K_\ell(y,x) \exp\bigg(\!-\frac{y^2}{2\alpha^2} \bigg) \dif y = \bigg( \frac{\ell^2}{\alpha^2 + \ell^2} \bigg)^{1/2} \exp\bigg(\!-\frac{x^2}{2(\alpha^2 + \ell^2)} \bigg)
  \end{equation}
  and
  \begin{equation*}
    \begin{split}
      \norm[0]{ \mathcal{I}_{\alpha,\ell} }_{\ell} &= \Bigg( \frac{1}{\sqrt{2\pi} \alpha} \int_\R \bigg( \frac{\ell^2}{\alpha^2 + \ell^2} \bigg)^{1/2} \exp\bigg(\!-\frac{x^2}{2(\alpha^2 + \ell^2)} \bigg)  \exp\bigg(\!-\frac{x^2}{2\alpha^2} \bigg) \dif x \Bigg)^{1/2} \\
      &= \bigg( 1 + \frac{2\alpha^2}{\ell^2} \bigg)^{-1/4}.
    \end{split}
  \end{equation*}
  The norm of the quadrature representer is
  \begin{equation*}
    \begin{split}
      \norm[0]{\mathcal{Q}_{\alpha,\ell,n}^\textsm{GH}}_{\ell} ={}& \Bigg( \sum_{i=1}^n \sum_{j=1}^n w_{\alpha,\ell,n,i}^\textsm{GH} w_{\alpha,\ell,n,j}^\textsm{GH} K_\ell \big(x_{\alpha,\ell,n,i}^\textsm{GH}, x_{\alpha,\ell,n,j}^\textsm{GH} \big) \Bigg)^{1/2} \\
      ={}& \frac{\beta}{\alpha} \Bigg( \sum_{i=1}^n \sum_{j=1}^n w_{n,i}^\textsm{GH} w_{n,j}^\textsm{GH} \exp\bigg( \frac{\beta^2 (x_{n,i}^\textsm{GH})^2}{2\ell^2} \bigg) \exp\bigg( \frac{\beta^2 (x_{n,j}^\textsm{GH})^2}{2\ell^2} \bigg)  \Bigg. \\
        &\hspace{2.5cm}\times \Bigg. \exp\bigg( \!- \frac{\beta^2(x_{n,i}^\textsm{GH}-x_{n,j}^\textsm{GH})^2}{2\ell^2}\bigg) \Bigg)^{1/2} \\
      ={}& \frac{\beta}{\alpha} \Bigg( \sum_{i=1}^n w_{n,i}^\textsm{GH} \sum_{j=1}^n w_{n,j}^\textsm{GH} \exp\bigg( \frac{\beta^2 x_{n,i}^\textsm{GH} x_{n,j}^\textsm{GH}}{\ell^2} \bigg) \Bigg)^{1/2}.
      \end{split}
  \end{equation*}
  We recognise the inner sum in the last equation as the Gauss--Hermite integral approximation $Q_{1,n}^\textsm{GH}(g_i)$ for the function $g_i(x) \coloneqq \exp( \beta^2 x_{n,i}^\textsm{GH} x/\ell^2)$.
  Because derivatives of all orders of these function are everywhere positive, we conclude from~\eqref{eq:GH-error} that
  \begin{equation*}
    \begin{split}
      \sum_{j=1}^n w_{n,j}^\textsm{GH} \exp\bigg( \frac{\beta^2 x_{n,i}^\textsm{GH} x_{n,j}^\textsm{GH}}{\ell^2} \bigg) &\leq \frac{1}{\sqrt{2\pi} } \int_\R \exp\bigg( \frac{\beta^2 x_{n,i}^\textsm{GH} x}{\ell^2}\bigg) \exp\bigg(\! - \frac{x^2}{2} \bigg) \dif x \\
      &= \exp\bigg( \frac{\beta^4 (x_{n,i}^\textsm{GH})^2}{2\ell^4} \bigg).
      \end{split}
  \end{equation*}
  The positivity of the Gauss--Hermite weights thus gives
  \begin{equation} \label{eq:Q-representer-intermediate}
    \norm[0]{\mathcal{Q}_{\alpha,\ell,n}^\textsm{GH}}_{\ell} \leq \frac{\beta}{\alpha} \Bigg( \sum_{i=1}^n w_{n,i}^\textsm{GH} \exp\bigg( \frac{\beta^4 (x_{n,i}^\textsm{GH})^2}{2\ell^4} \bigg) \Bigg)^{1/2},
  \end{equation}
  where the sum is the Gauss--Hermite approximation of $I_1(g)$ for $g(x) \coloneqq \exp( \beta^4 x^2/(2\ell^4) )$.
  Because even order derivatives of $g$ are everywhere positive, it follows from~\eqref{eq:GH-error} that
  \begin{equation*}
    \begin{split}
    \sum_{i=1}^n w_{n,i}^\textsm{GH} \exp\bigg( \frac{\beta^4 (x_{n,i}^\textsm{GH})^2}{2\ell^4} \bigg) &\leq \frac{1}{\sqrt{2\pi}} \int_\R \exp\bigg( \frac{\beta^4 x^2}{2\ell^4}\bigg) \exp\bigg(\! - \frac{x^2}{2} \bigg) \dif x \\
    &= \frac{\ell^2}{\sqrt{\ell^4-\beta^4}} \\
    &= \bigg(1 - \frac{\alpha^4}{(\alpha^2+\ell^2)^2} \bigg)^{-1/2}.
    \end{split}
  \end{equation*}
  Inserting the above estimate into~\eqref{eq:Q-representer-intermediate} and observing that
  \begin{equation*}
    \frac{\beta}{\alpha} \bigg( 1 - \frac{\alpha^4}{(\alpha^2+\ell^2)^2} \bigg)^{-1/4} = \frac{\beta}{\alpha}\bigg(1 - \frac{\beta^4}{\ell^4} \bigg)^{-1/4} = \bigg( \frac{\alpha^4}{\beta^4} - \frac{\alpha^4}{\ell^4} \bigg)^{-1/4} = \bigg( 1 + \frac{2\alpha^2}{\ell^2} \bigg)^{-1/4}
  \end{equation*}
  yields the claim.
\end{proof}

\begin{lemma} \label{lemma:lower-bound-d} Let $\b{n} \in \N^d$ and $1 \leq i \leq d$. For the function
  \begin{equation*}
    f_i(\b{x}) \coloneqq \phi_{\ell_1,0}(x_1) \cdots \phi_{\ell_{i-1},0}(x_{i-1}) \; \phi_{\ell_i, 2n_i}(x_i) \; \phi_{\ell_{i+1},0}(x_{i+1}) \cdots \phi_{\ell_d,0}(x_d)
  \end{equation*}
  we have
  \begin{equation*}
    I_{\b{\alpha}}(f_i) - Q_{\b{\alpha}, \b{\ell}, \b{n}}^\textsm{GH}(f_i)  = C_{n_i} \Bigg[ \prod_{j = 1}^d \frac{\beta_j}{\alpha_j} \Bigg] \bigg( \frac{\beta_i^2}{2\ell_i^2} \bigg)^{n_i} n_i^{1/4},
  \end{equation*}
  where $C_{n_i} > 0$ is defined in~\eqref{eq:C} and 
  \begin{equation*}
    \beta_i \coloneqq \sqrt{ \frac{\alpha_i^2\ell_i^2}{\alpha_i^2+\ell_i^2} }.
  \end{equation*}
\end{lemma}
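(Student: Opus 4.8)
The plan is to exploit the complete tensor-product structure of the problem. Both the target functional $I_{\b{\alpha}}$ is a product Gaussian integral and the rule $Q_{\b{\alpha}, \b{\ell}, \b{n}}^\textsm{GH}$ is a tensor product of univariate rules by~\eqref{eq:scaled-GH-tensor}, so both factorise coordinatewise; moreover, the test function $f_i$ is itself a product of univariate basis functions. Consequently I would first observe that both $I_{\b{\alpha}}(f_i)$ and $Q_{\b{\alpha}, \b{\ell}, \b{n}}^\textsm{GH}(f_i)$ factorise into products over $j = 1, \ldots, d$ of the corresponding univariate quantities applied to the $j$th factor of $f_i$, which is $\phi_{\ell_i, 2n_i}$ when $j = i$ and $\phi_{\ell_j, 0}$ otherwise.

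Next I would record the univariate values on the inactive coordinates $j \neq i$. Since $\b{n} \in \N^d$ we have $n_j \geq 1$, so the univariate scaled rule $Q_{\alpha_j, \ell_j, n_j}^\textsm{GH}$ is exact for $\phi_{\ell_j, 0}$ by its defining property~\eqref{eq:SGH-conditions}; hence $I_{\alpha_j}(\phi_{\ell_j, 0}) = Q_{\alpha_j, \ell_j, n_j}^\textsm{GH}(\phi_{\ell_j, 0})$. The common value follows from the $m = 0$ instance of~\eqref{eq:SGH-identity}, namely $I_{\alpha_j}(\phi_{\ell_j, 0}) = (\beta_j/\alpha_j)\, I_{\beta_j}(x^0) = \beta_j/\alpha_j$, using that $\phi_{\ell_j, 0} = \psi_{\ell_j, 0}$ and $I_{\beta_j}(x^0) = 1$.

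I would then invoke the elementary identity that, when two products of $d$ factors agree in every factor except the $i$th, their difference equals the difference of the $i$th factors times the common value of the remaining factors. Writing $a_j$ and $b_j$ for the univariate integral and quadrature values of the $j$th factor of $f_i$, we have $a_j = b_j = \beta_j/\alpha_j$ for all $j \neq i$, so that
\[
  I_{\b{\alpha}}(f_i) - Q_{\b{\alpha}, \b{\ell}, \b{n}}^\textsm{GH}(f_i) = \bigl( a_i - b_i \bigr) \prod_{j \neq i} \frac{\beta_j}{\alpha_j}.
\]
Finally I would apply the univariate Lemma~\ref{lemma:2n} to the active coordinate, giving $a_i - b_i = I_{\alpha_i}(\phi_{\ell_i, 2n_i}) - Q_{\alpha_i, \ell_i, n_i}^\textsm{GH}(\phi_{\ell_i, 2n_i}) = C_{n_i}\,(\beta_i/\alpha_i)\,(\beta_i^2/(2\ell_i^2))^{n_i}\, n_i^{1/4}$, and absorb the factor $\beta_i/\alpha_i$ into $\prod_{j \neq i} \beta_j/\alpha_j$ to produce $\prod_{j=1}^d \beta_j/\alpha_j$, which is exactly the claimed expression.

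As for the main obstacle: there is essentially no analytic difficulty here, since all the nontrivial estimation has already been carried out in Lemma~\ref{lemma:2n}. The only points requiring care are purely bookkeeping: verifying that the tensor-product functional and rule genuinely factorise on the product function $f_i$, so that the difference-of-products identity applies, and checking that exactness holds on the inactive coordinates, which hinges only on $n_j \geq 1$.
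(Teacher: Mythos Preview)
Your proposal is correct and follows essentially the same approach as the paper: factorise both $I_{\b{\alpha}}(f_i)$ and $Q_{\b{\alpha},\b{\ell},\b{n}}^\textsm{GH}(f_i)$ over coordinates, use exactness of the univariate scaled rule on $\phi_{\ell_j,0}$ for $j\neq i$ to make all inactive factors coincide (with common value $\beta_j/\alpha_j$), and then apply Lemma~\ref{lemma:2n} on the active coordinate. The paper writes the difference via an explicit add--subtract decomposition while you invoke the ``difference of two products agreeing in all but one factor'' identity directly, but these are the same computation.
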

\begin{proof} Write
  \begin{equation*}
    \begin{split}
      I_{\b{\alpha}}(f_i) - Q_{\b{\alpha}, \b{\ell}, \b{n}}^\textsm{GH}(f_i) ={}&  \big[ I_{\alpha_i}(\phi_{\ell_i, 2n_i}) - Q_{\alpha_i,\ell_i,n_i}^\textsm{GH}(\phi_{\ell_i, 2n_i}) \big] \prod_{j \neq i} I_{\alpha_j}(\phi_{\ell_j,0}) \\
        &+ Q_{\alpha_i, \ell_i, n_i}^\textsm{GH}(\phi_{\ell_i, 2n_i}) \Bigg[\prod_{j \neq i} I_{\alpha_j}(\phi_{\ell_j,0}) - \prod_{j \neq i} Q_{\alpha_j, \ell_j, n_j}^\textsm{GH}(\phi_{\ell_j, 0})\Bigg].
        \end{split}
  \end{equation*}
  Since $Q_{\alpha_j, \ell_j, n_j}^\textsm{GH}(\phi_{\ell_j, 0}) = I_{\alpha_j}(\phi_{\ell_j,0})$ for $j=1,\ldots,d$, the second term vanishes.
  Because
  \begin{equation*}
    \prod_{j \neq i} I_{\alpha_j}(\phi_{\ell_j,0}) = \prod_{j \neq i} \Bigg[ \frac{1}{\sqrt{2\pi} \alpha_j} \int_{\R} \exp\bigg(\!-\frac{x_j^2}{2\ell_j^2} \bigg) \exp\bigg(\!-\frac{x_j^2}{2\alpha_j^2} \bigg) \dif x_j \Bigg] = \prod_{j \neq i} \frac{\beta_j}{\alpha_j},
  \end{equation*}
  the claim then follows from Lemma~\ref{lemma:2n}.
\end{proof}

\begin{theorem} \label{thm:gh-tensor} For any $\b{n} \in \N^d$, the tensor product rule~\eqref{eq:scaled-GH-tensor} satisfies
  \begin{equation*}
    e_{\b{\alpha}, \b{\ell}}( Q_{\b{\alpha}, \b{\ell}, \b{n}}^\textsm{GH} ) <  \pi^{-1/4}  \sum_{i=1}^d \frac{\ell_i}{(\alpha_i^2+\ell_i^2)^{1/2}} \Bigg[ \prod_{j \neq i} \bigg(1 + \frac{2\alpha_j^2}{\ell_j^2} \bigg)^{-1/4} \Bigg] \bigg( \frac{\alpha_i^2}{\alpha_i^2 + \ell_i^2} \bigg)^{n_i} n_i^{-1/4}
  \end{equation*}
  and
  \begin{equation*}
    e_{\b{\alpha}, \b{\ell}}( Q_{\b{\alpha}, \b{\ell}, \b{n}}^\textsm{GH} ) \geq \Bigg[ \prod_{j = 1}^d \frac{\ell_j}{(\alpha_j^2+\ell_j^2)^{1/2}} \Bigg] \min_{i=1,\ldots,d} C_{n_i} \bigg( \frac{\alpha_i^2}{2(\alpha_i^2+\ell_i^2)} \bigg)^{n_i} n_i^{1/4} ,
  \end{equation*}
  where $C_{n_i} > 0$ is defined in~\eqref{eq:C}.
\end{theorem}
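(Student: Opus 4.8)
The plan is to treat the two bounds separately: the lower bound is an immediate consequence of Lemma~\ref{lemma:lower-bound-d}, while the upper bound rests on a telescoping decomposition of the tensor-product representers together with the univariate estimates already established.

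For the lower bound, each test function $f_i$ from Lemma~\ref{lemma:lower-bound-d} is a product of univariate orthonormal basis functions and hence satisfies $\norm[0]{f_i}_{\b{\ell}} = 1$. Consequently $e_{\b{\alpha},\b{\ell}}(Q_{\b{\alpha},\b{\ell},\b{n}}^\textsm{GH}) \geq \abs[0]{I_{\b{\alpha}}(f_i) - Q_{\b{\alpha},\b{\ell},\b{n}}^\textsm{GH}(f_i)}$ for every $i$. Using the identities $\beta_j/\alpha_j = \ell_j/(\alpha_j^2+\ell_j^2)^{1/2}$ and $\beta_i^2/(2\ell_i^2) = \alpha_i^2/(2(\alpha_i^2+\ell_i^2))$ to rewrite the exact expression supplied by Lemma~\ref{lemma:lower-bound-d}, one sees that the common prefactor $\prod_{j=1}^d \ell_j/(\alpha_j^2+\ell_j^2)^{1/2}$ does not depend on $i$. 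Since the worst-case error dominates each individual error, it dominates the largest of them, and a fortiori the minimum over $i$ appearing in the statement.

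For the upper bound, I would exploit the tensor-product structure $\mathcal{H}(K_{\b{\ell}})$, in which, by \eqref{eq:wce-representer} and the explicit factorizations $\mathcal{I}_{\b{\alpha},\b{\ell}} = \bigotimes_{i=1}^d \mathcal{I}_{\alpha_i,\ell_i}$ and $\mathcal{Q}_{\b{\alpha},\b{\ell},\b{n}}^\textsm{GH} = \bigotimes_{i=1}^d \mathcal{Q}_{\alpha_i,\ell_i,n_i}^\textsm{GH}$, the worst-case error equals $\norm[0]{\mathcal{I}_{\b{\alpha},\b{\ell}} - \mathcal{Q}_{\b{\alpha},\b{\ell},\b{n}}^\textsm{GH}}_{\b{\ell}}$. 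Introducing the hybrid partial products
\[
  P_k \coloneqq \mathcal{Q}_{\alpha_1,\ell_1,n_1}^\textsm{GH} \otimes \cdots \otimes \mathcal{Q}_{\alpha_k,\ell_k,n_k}^\textsm{GH} \otimes \mathcal{I}_{\alpha_{k+1},\ell_{k+1}} \otimes \cdots \otimes \mathcal{I}_{\alpha_d,\ell_d},
\]
so that $P_0 = \mathcal{I}_{\b{\alpha},\b{\ell}}$ and $P_d = \mathcal{Q}_{\b{\alpha},\b{\ell},\b{n}}^\textsm{GH}$, gives the telescoping identity $\mathcal{I}_{\b{\alpha},\b{\ell}} - \mathcal{Q}_{\b{\alpha},\b{\ell},\b{n}}^\textsm{GH} = \sum_{i=1}^d (P_{i-1} - P_i)$, where the $i$-th summand differs from its neighbours only through the $i$-th factor $\mathcal{I}_{\alpha_i,\ell_i} - \mathcal{Q}_{\alpha_i,\ell_i,n_i}^\textsm{GH}$. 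Applying the triangle inequality and using that the tensor-product norm factorizes over the univariate factors, the $i$-th summand contributes the univariate worst-case error $e_{\alpha_i,\ell_i}(Q_{\alpha_i,\ell_i,n_i}^\textsm{GH})$ in coordinate $i$, bounded by Theorem~\ref{thm:gh-1d}, multiplied by the norms of the remaining factors. For $j < i$ the remaining factor is a quadrature representer and for $j > i$ an integral representer; by Lemma~\ref{lemma:representers} both have norm at most $(1 + 2\alpha_j^2/\ell_j^2)^{-1/4}$. Collecting these factors over $j \neq i$ and summing over $i$ reproduces the claimed upper bound.

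The only nontrivial point is the tensor-product bookkeeping: writing down the hybrid decomposition correctly and justifying that the norm on $\mathcal{H}(K_{\b{\ell}})$ factorizes over the univariate factors. Everything else plugs in directly, and the fact that Lemma~\ref{lemma:representers} bounds both the $\mathcal{I}$ and the $\mathcal{Q}$ representers by the \emph{same} quantity $(1 + 2\alpha_j^2/\ell_j^2)^{-1/4}$ is precisely what keeps the remaining-factor product clean and collapses the estimate into a single sum over~$i$.
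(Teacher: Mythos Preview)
Your proposal is correct and essentially identical to the paper's proof: the lower bound comes directly from Lemma~\ref{lemma:lower-bound-d}, and the upper bound from a telescoping (hybrid) decomposition of the tensor-product representers combined with Lemma~\ref{lemma:representers} and Theorem~\ref{thm:gh-1d}. The only cosmetic difference is that the paper writes the telescoping recursively (peeling off the last coordinate and iterating), so its $i$th summand carries $\mathcal{I}$-factors for $j<i$ and $\mathcal{Q}$-factors for $j>i$, the reverse of your $P_k$, but since Lemma~\ref{lemma:representers} bounds both representers by the same quantity this yields the same estimate.
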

\begin{proof}
  The lower bound follows directly from Lemma~\ref{lemma:lower-bound-d} because for each $i = 1,\ldots,d$ the function $f_i$ is one of the orthonormal basis functions~\eqref{eq:phi-d} and thus of unit norm.
  The proof of the upper bound is fairly standard.
  We use the representer form of the worst-case error in~\eqref{eq:wce-representer}.
  For any $\b{z} \in \R^d$ and $2 \leq q \leq d+1$ define $\b{z}_{1:q} \coloneqq (z_1, \ldots, z_{q-1}) \in \R^{q-1}$.
  Also denote
  \begin{equation*}
    \mathcal{I}_{\b{\alpha},\b{\ell}}[q] \coloneqq \mathcal{I}_{\b{\alpha}_{1:q}, \b{\ell}_{1:q}}, \quad \mathcal{Q}_{\b{\alpha}, \b{\ell}, \b{n}}^\textsm{GH}[q] \coloneqq \mathcal{Q}_{\b{\alpha}_{1:q}, \b{\ell}_{1:q}, \b{n}_{1:q}}^\textsm{GH} \quad \text{ and } \quad Q_{\b{\alpha}, \b{\ell}, \b{n}}^\textsm{GH}[q] \coloneqq Q_{\b{\alpha}_{1:q}, \b{\ell}_{1:q}, \b{n}_{1:q}}^\textsm{GH}.
  \end{equation*}
  Write
  \begin{equation*}
      \mathcal{I}_{\b{\alpha},\b{\ell}} - \mathcal{Q}_{\b{\alpha}, \b{\ell}, \b{n}}^\textsm{GH} = \mathcal{I}_{\b{\alpha},\b{\ell}}[d] \big( \mathcal{I}_{\alpha_d, \ell_d} - \mathcal{Q}_{\alpha_d,\ell_d,n_d}^\textsm{GH} \big) + \mathcal{Q}_{\alpha_d,\ell_d,n_d}^\textsm{GH} \big( \mathcal{I}_{\b{\alpha},\b{\ell}}[d] - \mathcal{Q}_{\b{\alpha}, \b{\ell}, \b{n}}^\textsm{GH}[d] \big).
  \end{equation*}
  Therefore
  \begin{equation*}
    \begin{split}
      e_{\b{\alpha}, \b{\ell}}( Q_{\b{\alpha}, \b{\ell}, \b{n}}^\textsm{GH} ) = \norm[0]{ \mathcal{I}_{\b{\alpha},\b{\ell}} - \mathcal{Q}_{\b{\alpha}, \b{\ell}, \b{n}}^\textsm{GH} }_{\b{\ell}}
      \leq{}& \norm[0]{\mathcal{I}_{\b{\alpha},\b{\ell}}[d]}_{\b{\ell}_{1:d}} e_{\alpha_d,\ell_d}( Q_{\alpha_d,\ell_d,n_d}^\textsm{GH} ) \\
      &+ \norm[0]{ \mathcal{Q}_{\alpha_d,\ell_d,n_d}^\textsm{GH} }_{\ell_d} e_{\alpha_{1:d}, \ell_{1:d}}\big( Q_{\b{\alpha}, \b{\ell}, \b{n}}^\textsm{GH}[d] \big).
      \end{split}
  \end{equation*}
  Iteration of this inequality and repeated applications of Lemma~\ref{lemma:representers} yield
\begin{equation*}
  \begin{split}
    e_{\b{\alpha}, \b{\ell}}( Q_{\b{\alpha}, \b{\ell}, \b{n}}^\textsm{GH} ) \leq{}& e_{\alpha_d,\ell_d}( Q_{\alpha_d,\ell_d,n_d}^\textsm{GH} ) \prod_{j=1}^{d-1} \bigg(1 + \frac{2\alpha_j^2}{\ell_j^2} \bigg)^{-1/4} + \bigg(1 + \frac{2\alpha_d^2}{\ell_d^2} \bigg)^{-1/4} e_{\alpha_{1:d}, \ell_{1:d}}\big( Q_{\b{\alpha}, \b{\ell}, \b{n}}^\textsm{GH}[d] \big) \\
    \leq{}& e_{\alpha_d,\ell_d}( Q_{\alpha_d,\ell_d,n_d}^\textsm{GH} ) \prod_{j=1}^{d-1} \bigg(1 + \frac{2\alpha_j^2}{\ell_j^2} \bigg)^{-1/4} \\
    &+ \bigg(1 + \frac{2\alpha_d^2}{\ell_d^2} \bigg)^{-1/4} \Bigg[ e_{\alpha_{d-1},\ell_{d-1}}( Q_{\alpha_{d-1},\ell_{d-1},n_{d-1}}^\textsm{GH} ) \prod_{j=1}^{d-2} \bigg(1 + \frac{2\alpha_j^2}{\ell_j^2} \bigg)^{-1/4} \Bigg. \\
      &\hspace{3.1cm} \Bigg.+ \bigg(1 + \frac{2\alpha_{d-1}^2}{\ell_{d-1}^2} \bigg)^{-1/4} e_{\alpha_{1:d-1}, \ell_{1:d-1}}\big( Q_{\b{\alpha}, \b{\ell}, \b{n}}^\textsm{GH}[d-1] \big) \Bigg] \\
    \vdots{}& \\
    \leq{}& \sum_{i=1}^d e_{\alpha_i,\ell_i}( Q_{\alpha_i,\ell_i,n_i}^\textsm{GH} ) \prod_{j \neq i} \bigg(1 + \frac{2\alpha_j^2}{\ell_j^2} \bigg)^{-1/4}.
    \end{split}
\end{equation*}
The claim then follows from the upper bound in~\eqref{eq:gh-1d} applied to each of the $d$ one-dimensional worst-case errors.
\end{proof}

In the isotropic case the statement of Theorem~\ref{thm:gh-tensor} simplifies considerably.

\begin{corollary} \label{cor:gh-tensor-isotropic} Consider the tensor product rule~\eqref{eq:scaled-GH-tensor} when $\alpha_1 = \cdots = \alpha_d = \alpha$, $\ell_1 = \cdots = \ell_d = \ell$ and \sloppy{${n_1 = \cdots = n_d = n}$} for $\alpha,\ell>0$ and $n \geq 1$. Then
  \begin{equation*}
    e_{\b{\alpha}, \b{\ell}}(Q_{\b{\alpha}, \b{\ell}, \b{n}}^\textsm{GH} ) < d\pi^{-1/4}  \frac{\ell}{\sqrt{\alpha^2+\ell^2}} \bigg(1+\frac{2\alpha^2}{\ell^2} \bigg)^{-(d-1)/4} \bigg(\frac{\alpha^2}{\alpha^2+\ell^2}\bigg)^n n^{-1/4}
  \end{equation*}
  and
  \begin{equation*}
    e_{\b{\alpha}, \b{\ell}}(Q_{\b{\alpha}, \b{\ell}, \b{n}}^\textsm{GH} ) \geq C_n \bigg( \frac{\ell^2}{\alpha^2+\ell^2}\bigg)^{d/2} \bigg(\frac{\alpha^2}{2(\alpha^2+\ell^2)}\bigg)^n n^{1/4},
  \end{equation*}
  where $C_n > 0$ is defined in~\eqref{eq:C}.
\end{corollary}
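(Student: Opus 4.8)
The plan is to obtain both bounds by directly specializing Theorem~\ref{thm:gh-tensor} to the isotropic parameters $\alpha_i = \alpha$, $\ell_i = \ell$, and $n_i = n$ for all $i = 1, \ldots, d$. Since that theorem already furnishes upper and lower bounds for arbitrary $\b{\alpha}$, $\b{\ell}$, and $\b{n}$, the corollary follows by substitution and collection of terms; no new estimate is required.

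First I would treat the upper bound. Under the isotropic substitution every summand in the bound of Theorem~\ref{thm:gh-tensor} becomes identical: the prefactor $\ell_i/(\alpha_i^2+\ell_i^2)^{1/2}$ reduces to $\ell/\sqrt{\alpha^2+\ell^2}$, the product over $j \neq i$ consists of exactly $d-1$ equal factors $(1+2\alpha^2/\ell^2)^{-1/4}$ and hence equals $(1+2\alpha^2/\ell^2)^{-(d-1)/4}$, and the remaining factors collapse to $(\alpha^2/(\alpha^2+\ell^2))^n\, n^{-1/4}$. Summing $d$ copies of this common term then produces the claimed upper bound with its leading factor of $d$.

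Next I would treat the lower bound. The product $\prod_{j=1}^d \ell_j/(\alpha_j^2+\ell_j^2)^{1/2}$ collapses to $(\ell/\sqrt{\alpha^2+\ell^2})^d$, which I would rewrite as $(\ell^2/(\alpha^2+\ell^2))^{d/2}$. Because the $d$ quantities $C_{n_i}(\alpha_i^2/(2(\alpha_i^2+\ell_i^2)))^{n_i}\, n_i^{1/4}$ coincide under the isotropic substitution, their minimum is simply $C_n(\alpha^2/(2(\alpha^2+\ell^2)))^n\, n^{1/4}$, which yields the stated lower bound.

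As this is a pure specialization of an already-established result, there is no substantive obstacle. The only points requiring care are bookkeeping: verifying that the product over $j \neq i$ contains exactly $d-1$ factors (so that the exponent is $-(d-1)/4$ rather than $-d/4$) and recognizing the elementary identity $(\ell/\sqrt{\alpha^2+\ell^2})^d = (\ell^2/(\alpha^2+\ell^2))^{d/2}$ in the lower bound.
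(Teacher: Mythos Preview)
Your proposal is correct and matches the paper's approach: the corollary is presented there without proof, immediately after Theorem~\ref{thm:gh-tensor}, as a direct specialization to the isotropic case. Your bookkeeping on the $d-1$ factors in the product and the rewriting $(\ell/\sqrt{\alpha^2+\ell^2})^d = (\ell^2/(\alpha^2+\ell^2))^{d/2}$ is exactly what is needed.
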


\begin{remark}
As the total number of points in Corollary~\ref{cor:gh-tensor-isotropic} is $N = n^d$, we obtain
\begin{equation} \label{eq:Nd-GH}
  C_1 \bigg( \frac{\alpha^2}{2(\alpha^2 + \ell^2)} \bigg)^{N^{1/d}} N^{1/(4d)} \leq e_{\b{\alpha}, \b{\ell}}(Q_{\b{\alpha}, \b{\ell}, \b{n}}^\textsm{GH} ) < C_2 \bigg( \frac{\alpha^2}{\alpha^2 + \ell^2} \bigg)^{N^{1/d}} N^{-1/(4d)}
\end{equation}
for certain constants $C_1, C_2 > 0$.
The curse of dimensionality thus manifests itself in the exponent $N^{1/d}$ that grows slower with $N$ when $d$ is large.
From~\eqref{eq:Nd-GH} one could derive a number of dimensional tractability results, as is done for tensor products of Gauss--Hermite rules in \citet{KuoSloanWozniakowski2017}.
\end{remark}

As a final result of this section we provide a multivariate generalisation of Theorem~\ref{thm:nth-minimal-1d}.
Let $N+1 = \prod_{i=1}^d(n_i+1)$ for any $n_i \geq 1$. As in the one-dimensional case, \citet[Theorem~5.1]{KuoSloanWozniakowski2017} have proved the lower bound
\begin{equation} \label{eq:min-error-lower-multi}
  e_{\b{\alpha}, \b{\ell}, N}^\textsm{min} \geq \frac{1}{N+1} \prod_{i=1}^d \Bigg[ \sqrt{ \frac{2(1+4\gamma_i^2)^{1/4} }{ (1+2\gamma_i^2+(1+4\gamma_i^2)^{1/2})\e  } } \, \frac{\omega_{\gamma_i}^{n_i} n_i!}{(2n_i)!} \Bigg]
\end{equation}
for the $N$th minimal error
\begin{equation} \label{eq:nth-minimal-d}
  e_{\b{\alpha}, \b{\ell}, N}^\textsm{min} \coloneqq \inf_{ Q_N } e_{\b{\alpha}, \b{\ell}}( Q_N ),
\end{equation}
where the infimum is over $d$-dimensional $N$-point quadrature rules $Q_N$.
Here
\begin{equation} \label{eq:gamma-omega-d}
  \gamma_i \coloneqq \frac{\alpha_i}{\ell_i} \quad \text{ and } \quad \omega_{\gamma_i} \coloneqq \frac{2\gamma_i^2}{1+2\gamma_i^2 + \sqrt{1+4\gamma_i^2}} < 1.
\end{equation}
Combining the upper bound of Theorem~\ref{thm:gh-tensor} and~\eqref{eq:stirling-for-minimal} with the bound~\eqref{eq:min-error-lower-multi} yields the following result, where the upper bound based on the tensor product rule with $\prod_{i=1}^d n_i < N$ points is valid since the minimal error is decreasing in the number of points.

\begin{theorem} \label{thm:nth-minimal-d} For any $N \geq 1$ such that $N+1=\prod_{i=1}^d(n_i+1)$ for some $n_i \geq 1$ the $N$th minimal error~\eqref{eq:nth-minimal-d} satisfies
  \begin{equation*}
    (N+1)^{-1} \prod_{i=1}^d \bar{C}_{n_i}(\gamma_i) \bigg( \frac{\omega_{\gamma_i} \e}{4n_i} \bigg)^{n_i} \leq e_{\b{\alpha}, \b{\ell}, N}^\textsm{min} < \pi^{-1/4}  \sum_{i=1}^d \widehat{C}_i \bigg( \frac{\alpha_i^2}{\alpha_i^2 + \ell_i^2} \bigg)^{n_i} n_i^{-1/4},
  \end{equation*}
  where $\gamma_i$ and $\omega_{\gamma_i}$ are defined in~\eqref{eq:gamma-omega-d}, $(\bar{C}_n(\gamma))_{n=1}^\infty$ is the positive sequence in~\eqref{eq:C-bar-gamma} and
  \begin{equation*}
    \widehat{C}_i \coloneqq \frac{\ell_i}{(\alpha_i^2+\ell_i^2)^{1/2}} \prod_{j \neq i} \bigg(1 + \frac{2\alpha_j^2}{\ell_j^2} \bigg)^{-1/4}.
  \end{equation*}
\end{theorem}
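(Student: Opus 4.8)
The plan is to assemble the two bounds entirely from results already in hand: the lower bound will follow by a purely algebraic rewriting of the multivariate lower bound \eqref{eq:min-error-lower-multi}, while the upper bound will follow by combining the monotonicity of the minimal error in the number of points with the upper bound of Theorem~\ref{thm:gh-tensor} applied to the specific tensor-product rule $Q_{\b{\alpha},\b{\ell},\b{n}}^\textsm{GH}$.

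For the lower bound I would start from \eqref{eq:min-error-lower-multi} and observe that the definition \eqref{eq:C-bar-gamma} of $\bar{C}_{n_i}(\gamma_i)$ has been engineered precisely to absorb the factor $n_i!/(2n_i)!$ together with the square-root prefactor. Concretely, in the $i$th factor I would multiply and divide by $(\e/(4n_i))^{n_i}$, so that the quantity $\sqrt{\cdots}\,\omega_{\gamma_i}^{n_i} n_i!/(2n_i)!$ becomes $\bar{C}_{n_i}(\gamma_i)(\omega_{\gamma_i}\e/(4n_i))^{n_i}$. Taking the product over $i=1,\dots,d$ and pulling the common $(N+1)^{-1}$ out front reproduces the stated lower bound. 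No estimation is needed here: the Stirling bound \eqref{eq:stirling-for-minimal} enters only implicitly, through the positivity and asymptotics of $\bar{C}_n(\gamma)$ already inherited from the one-dimensional analysis, and plays no role in the substitution itself.

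For the upper bound the crux is a careful count of evaluation points. Since $\prod_{i=1}^d n_i < \prod_{i=1}^d (n_i+1) = N+1$, the tensor-product rule $Q_{\b{\alpha},\b{\ell},\b{n}}^\textsm{GH}$ uses $\prod_{i=1}^d n_i \leq N$ points. Because the minimal error \eqref{eq:nth-minimal-d} is non-increasing in the number of points, I would bound $e_{\b{\alpha},\b{\ell},N}^\textsm{min}$ from above by the minimal error over rules with $\prod_{i=1}^d n_i$ points, which in turn is at most the worst-case error $e_{\b{\alpha},\b{\ell}}(Q_{\b{\alpha},\b{\ell},\b{n}}^\textsm{GH})$ of this particular rule. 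Applying the upper bound of Theorem~\ref{thm:gh-tensor} and recognising the grouped constant $\tfrac{\ell_i}{(\alpha_i^2+\ell_i^2)^{1/2}}\prod_{j\neq i}(1+2\alpha_j^2/\ell_j^2)^{-1/4}$ as $\widehat{C}_i$ then gives exactly the claimed bound.

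The only step requiring genuine care is the point-count bookkeeping underpinning the upper bound: the tensor rule is indexed by the multi-index $\b{n}$ rather than by $N$, so one must verify that $\prod_{i=1}^d n_i \leq N$ and invoke the monotonicity of the minimal error to pass from this fixed rule to a bound on the $N$-point minimal error. Everything else reduces to direct substitution into \eqref{eq:min-error-lower-multi} and Theorem~\ref{thm:gh-tensor}, so I expect no further obstacles.
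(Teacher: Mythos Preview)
Your proposal is correct and follows exactly the route the paper takes: the lower bound is an algebraic rewriting of \eqref{eq:min-error-lower-multi} via the definition of $\bar{C}_{n_i}(\gamma_i)$, and the upper bound comes from Theorem~\ref{thm:gh-tensor} together with the point-count observation $\prod_{i=1}^d n_i \leq N$ and the monotonicity of the minimal error. The paper compresses all of this into a single sentence preceding the theorem statement, so your expanded bookkeeping is faithful to the intended argument.
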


\section{Locally uniform points and optimal weights} \label{sec:kernel-quadrature}

This section contains a flexible construction which permits nested point sets in situations where an extensible integration rule is required.
In contrast to the scaled Gauss--Hermite rules in Section~\ref{sec:scaled-GH}, this construction is only proved to converge with a sub-exponential (though still super-algebraic) rate.
The construction and its analysis are based on results in scattered data approximation literature~\citep{Wendland2005,FasshauerMcCourt2015} and worst-case optimal integration rules in RKHSs~\citep{Oettershagen2017}.

\subsection{Rules with optimal weights}

Let $X = \{\b{x}_1, \ldots, \b{x}_n\} \subset \R^d$ be an arbitrary set of $n$ distinct points.
The integration rule based on these points having the minimal worst-case error is
\begin{equation*}
  Q_{\b{\alpha}, \b{\ell}, X}^\textsm{opt}(f) \coloneqq \sum_{i=1}^n w_{\b{\alpha}, \b{\ell}, X, i}^\textsm{opt} f(\b{x}_i)
\end{equation*}
with the weights
\begin{equation*}
  (w_{\b{\alpha}, \b{\ell}, X, 1}^\textsm{opt}, \ldots, w_{\b{\alpha}, \b{\ell}, X, n}^\textsm{opt}) = \argmin_{ \b{w} \in \R^n } e_{\b{\alpha}, \b{\ell}}( Q_{X, \b{w}} ),
\end{equation*}
where $Q_{X, \b{w}}(f) \coloneqq \sum_{i=1}^n w_i f(\b{x}_i)$.
Because the explicit form of the worst-case error in~\eqref{eq:wce-explicit} is
\begin{equation} \label{eq:wce-linear-algebra}
  e_{\b{\alpha}, \b{\ell}}(Q_n) = \sqrt{ I_{\b{\alpha}}^{\b{x}} I_{\b{\alpha}}^{\b{y}}( K_{\b{\ell}}(\b{x}, \b{y})) - 2 \b{w}^\T \b{z} + \b{w}^\T \b{K}_X \b{w} },
\end{equation}
where $z_i = I_{\b{\alpha}}( K_{\b{\ell}}(\cdot, \b{x}_i)) = \mathcal{I}_{\b{\alpha}, \b{\ell}}(\b{x}_i)$ and $\b{K}_X$ is the $n \times n$ positive-definite kernel Gram matrix with elements $(\b{K}_X)_{i,j} = K_{\b{\ell}}(\b{x}_i, \b{x}_j)$, it is easy to see that the optimal weights are the solution to the linear system\footnote{The representers on the right-hand side can be computed in closed from by taking products of the one-dimensional representers in~\eqref{eq:integral-representer-explicit}. Because the Gaussian kernel is analytic, the linear system tends to become severely ill-conditioned~\citep{Schaback1995}, which can be somewhat mitigated by the use of approximations based on truncation of an orthonormal expansion of the Gaussian kernel~\citep{FasshauerMcCourt2012,KarvonenSarkka2019}.}
\begin{equation} \label{eq:linear-system}
  \begin{bmatrix} K_{\b{\ell}}(\b{x}_1, \b{x}_1) & \cdots & K_{\b{\ell}}(\b{x}_1, \b{x}_n) \\ \vdots & \ddots & \vdots \\ K_{\b{\ell}}(\b{x}_n, \b{x}_1) & \cdots & K_{\b{\ell}}(\b{x}_n, \b{x}_n) \end{bmatrix} \begin{bmatrix} w_{\b{\alpha}, \b{\ell}, X, 1}^\textsm{opt} \\ \vdots \\ w_{\b{\alpha}, \b{\ell}, X, n}^\textsm{opt} \end{bmatrix}
  = \begin{bmatrix} \mathcal{I}_{\b{\alpha}, \b{\ell}}(\b{x}_1) \\ \vdots \\ \mathcal{I}_{\b{\alpha}, \b{\ell}}(\b{x}_n) \end{bmatrix}.
\end{equation}
These integration rules, sometimes known as kernel quadrature rules, are useful because no restrictions are placed on the geometry of the evaluation points.
They also carry an interpretation as Bayesian quadrature rules~\citep{Briol2019} which can be used to quantify the epistemic uncertainty in the integral approximation.

Because the optimal weights solve~\eqref{eq:linear-system} it follows from~\eqref{eq:wce-linear-algebra} that
\begin{equation} \label{eq:wce-kq}
  \begin{split}
    e_{\b{\alpha}, \b{\ell}}(Q_{\b{\alpha}, \b{\ell}, X}^\textsm{opt}) = \norm[0]{ \mathcal{I}_{\b{\alpha}, \b{\ell}} - \mathcal{Q}_{\b{\alpha}, \b{\ell}, X}^\textsm{opt}}_{\b{\ell}} &= \sqrt{ \norm[0]{ \mathcal{I}_{\b{\alpha}, \b{\ell}} }_{\b{\ell}}^2 - \norm[0]{ \mathcal{Q}_{\b{\alpha}, \b{\ell}, X}^\textsm{opt}}_{\b{\ell}}^2 }\\
    &= \sqrt{ I_{\b{\alpha}}^{\b{x}} I_{\b{\alpha}}^{\b{y}}( K_{\b{\ell}}(\b{x}, \b{y})) - \sum_{i=1}^n w_{\b{\alpha}, \b{\ell}, X, i}^\textsm{opt} I_{\b{\alpha}}(K_{\b{\ell}}(\cdot, \b{x}_i))},
    \end{split}
\end{equation}
where $\mathcal{Q}_{\b{\alpha}, \b{\ell}, X}^\textsm{opt}$ is the representer of the integration rule $Q_{\b{\alpha}, \b{\ell}, X}^\textsm{opt}$.
To bound the worst-case error we use the connection between kernel quadrature rules and kernel interpolation. The kernel interpolant is the minimum-norm interpolant
\begin{equation} \label{eq:minimum-norm}
  s_{\b{\ell},X}f \coloneqq \argmin_{g \in \mathcal{H}(K_{\b{\ell}})} \Set{ \norm[0]{g}_{\b{\ell}} }{ g(\b{x}_i) = f(\b{x}_i) \text{ for all } \b{x}_i \in X}
\end{equation}
to $f$ at the points $X$ and it can be shown that the optimal integration rule is obtained by integrating this interpolant: $Q_{\b{\alpha}, \b{\ell}, X}^\textsm{opt}(f) = I_{\b{\alpha}}( s_{\b{\ell}, X} f)$.
The power function $P_{\b{\ell},X}$ is defined as the pointwise worst-case error of the kernel interpolant,
\begin{equation}\label{eq:power-function-wce}
  \begin{split}
  P_{\b{\ell},X}(\b{x}) &\coloneqq \sup_{ \norm[0]{f}_{\b{\ell}} \leq 1} \abs[0]{ f(\b{x}) - (s_{\b{\ell},X}f)(\b{x}) } \\
  &= \sup\Set{ f(\b{x}) }{ \norm[0]{f}_{\b{\ell}} \leq 1 \text{ and } f(\b{x}_i) = 0 \text{ for all } \b{x}_i \in X}.
  \end{split}
\end{equation}
The power function provides an error decoupling for approximation similar to~\eqref{eq:error-decoupling}:
\begin{equation} \label{eq:P-decoupling}
  \abs[0]{f(\b{x}) - (s_{\b{\ell}, X}f)(\b{x}) } \leq \norm[0]{f}_{\b{\ell}} P_{\b{\ell}, X}(\b{x})
\end{equation}
for any $f \in \mathcal{H}(K_{\b{\ell}})$ and $\b{x} \in \R^d$.
Since
\begin{equation} \label{eq:f-pointwise-RKHS}
  f(\b{x}) = \inprod{f}{K_{\b{\ell}}(\cdot, \b{x})}_{\b{\ell}} \leq \norm[0]{f}_{\b{\ell}} \norm[0]{K_{\b{\ell}}(\cdot, \b{x})}_{\b{\ell}} = \norm[0]{f}_{\b{\ell}} \sqrt{K_{\b{\ell}}(\b{x}, \b{x})} = \norm[0]{f}_{\b{\ell}}
\end{equation}
for $f \in \mathcal{H}(K_{\b{\ell}})$ and $\b{x} \in \R^d$, it follows from~\eqref{eq:power-function-wce} that $P_{\b{\ell}, X} \leq 1$.
Now, using~\eqref{eq:P-decoupling} the worst-case error can be bounded as follows:
\begin{equation} \label{eq:int-power-function}
  e_{\b{\alpha}, \b{\ell}}(Q_{\b{\alpha}, \b{\ell}, X}^\textsm{opt}) = \sup_{\norm[0]{f}_{\b{\ell}} \leq 1} \abs[0]{ I_{\b{\alpha}}(f) - Q_{\b{\alpha}, \b{\ell}, X}^\textsm{opt}(f) } \leq \sup_{\norm[0]{f}_{\b{\ell}} \leq 1} I_{\b{\alpha}}( \, \abs[0]{f - s_{\b{\ell},X}f } \,) \leq I_{\b{\alpha}}( P_{\b{\ell}, X } ).
\end{equation}

\subsection{Error estimates in one dimension}

We begin by presenting a general result on the $L^1$-norm of the power function on bounded cubes in dimension $d$.
For any finite point set $X = \{\b{x}_1, \ldots, \b{x}_n\} \subset \R^d$ define the fill-distance $h_{X,\Omega}$ on a bounded set $\Omega \subset \R^d$ as
\begin{equation} \label{eq:fill-distance}
  h_{X,\Omega} \coloneqq \sup_{ \b{x} \in \Omega} \, \min_{\b{x}_i \in X \cap \Omega} \, \norm[0]{ \b{x} - \b{x}_i}.
\end{equation}
Note that this differs from the standard definition of the fill-distance in scattered data approximation literature~\citep[e.g.,][Definition~1.4]{Wendland2005} in that $X$ is not required to be a subset of $\Omega$.
The following result is a localised version of the convergence results in \citet[Chapter~11]{Wendland2005} and \citet{RiegerZwicknagl2010}.
For other similar results, see \citet{RiegerZwicknagl2014}.
We use $\norm[0]{\cdot}_{L^1(\Omega)}$ to denote the $L^1$-norm on a Lebesgue-measurable set $\Omega \subset \R^d$. That is, $\norm[0]{f}_{L^1(\Omega)} = \int_\Omega \abs[0]{f(\b{x})} \dif \b{x}$.

\begin{proposition} \label{prop:rieger} Let $\Omega \subset \R^d$ be a closed cube with side length $R > 0$ and let $X \subset \R^d$ be a finite collection of distinct points.
  Consider the isotropic case $\b{\ell} = (\ell, \ldots, \ell) \in \R^d$ for some $\ell > 0$.
  Then there exist positive constants $C$ and $h_0$, which depend only on $\ell$, $d$ and $R$, such that
  \begin{equation} \label{eq:rieger}
    \norm[0]{ P_{\b{\ell},X} }_{L^1(\Omega)} \leq \exp\big( C \log ( h_{X,\Omega} ) \, h_{X,\Omega}^{-1} \, \big)
  \end{equation}
  whenever $h_{X,\Omega} \leq h_0$.
\end{proposition}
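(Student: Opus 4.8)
The plan is to reduce \eqref{eq:rieger} to the (non-localised) power-function decay estimates for the Gaussian kernel already available in the scattered-data literature, by means of two elementary reductions: first replace $X$ by its restriction $X \cap \Omega$ using the monotonicity of the power function, and then pass from an $L^\infty(\Omega)$ bound to the claimed $L^1(\Omega)$ bound using the boundedness of $\Omega$.

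First I would exploit the variational form of the power function in \eqref{eq:power-function-wce}. For a point set $X$ write $\mathcal{F}_X \coloneqq \Set{ f \in \mathcal{H}(K_{\b{\ell}}) }{ \norm[0]{f}_{\b{\ell}} \leq 1 \text{ and } f(\b{x}_i) = 0 \text{ for all } \b{x}_i \in X }$, so that $P_{\b{\ell},X}(\b{x}) = \sup_{f \in \mathcal{F}_X} f(\b{x})$. Since $X \cap \Omega \subseteq X$ we have $\mathcal{F}_X \subseteq \mathcal{F}_{X \cap \Omega}$, and therefore $P_{\b{\ell},X}(\b{x}) \leq P_{\b{\ell}, X \cap \Omega}(\b{x})$ for every $\b{x} \in \R^d$: adding interpolation points can only shrink the power function. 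This is precisely the step that accommodates the non-standard fill-distance in \eqref{eq:fill-distance}, because that definition counts only points of $X$ lying inside $\Omega$; consequently $h_{X,\Omega}$ coincides with the usual fill-distance of the genuine subset $X \cap \Omega \subset \Omega$. We have thus reduced the claim to the case in which all relevant points lie in $\Omega$.

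Next I would invoke the pointwise power-function estimate for the Gaussian kernel from \citet[Chapter~11]{Wendland2005} and \citet{RiegerZwicknagl2010}, applied to the finite set $X \cap \Omega \subset \Omega$ on the cube $\Omega$ of side length $R$. For infinitely smooth kernels these yield, for some $C' > 0$ and $h_0 > 0$ depending only on $\ell$, $d$ and $R$, a uniform bound of the form $\norm[0]{P_{\b{\ell}, X \cap \Omega}}_{L^\infty(\Omega)} \leq \exp\big( C' \log(h_{X,\Omega}) \, h_{X,\Omega}^{-1}\big)$ whenever $h_{X,\Omega} \leq h_0$; note that $\log(h_{X,\Omega}) \, h_{X,\Omega}^{-1} \to -\infty$ as $h_{X,\Omega} \to 0^+$, so this is genuine decay. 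Combining this with the monotonicity bound and estimating the $L^1$-norm by the volume of the cube gives
\begin{equation*}
  \norm[0]{P_{\b{\ell}, X}}_{L^1(\Omega)} \leq R^d \norm[0]{P_{\b{\ell}, X \cap \Omega}}_{L^\infty(\Omega)} \leq R^d \exp\big( C' \log(h_{X,\Omega}) \, h_{X,\Omega}^{-1} \big).
\end{equation*}
Because $\log(h) \, h^{-1} \to -\infty$, for any $0 < C < C'$ the constant factor $R^d$ can be absorbed once $h_{X,\Omega}$ is small enough, i.e.\ after possibly shrinking $h_0$, which delivers \eqref{eq:rieger}.

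The genuine technical content is entirely inside the imported estimate, so the main obstacle is to quote that result in the right form and with the right uniformity. The $\log(h)/h$ rate for the Gaussian arises from polynomial-reproduction (norming-set) arguments in which the polynomial degree is tuned as a function of $h$, and one must verify that the resulting constants $C'$ and $h_0$ depend only on $\ell$, $d$ and $R$ and are uniform over \emph{all} admissible configurations $X \cap \Omega$, not merely over a fixed one. I expect the remaining care to lie in checking that the monotonicity reduction does not disturb these dependencies and that the cited result can be stated under the single hypothesis $X \cap \Omega \subset \Omega$, so that the passage back to the full $X$ via power-function monotonicity is clean.
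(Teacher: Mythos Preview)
Your proposal is correct and follows essentially the same route as the paper: both reduce to $X \cap \Omega$ via the monotonicity $P_{\b{\ell},X} \leq P_{\b{\ell},X\cap\Omega}$ coming from~\eqref{eq:power-function-wce}, and then invoke the Rieger--Zwicknagl sampling inequality on the cube $\Omega$. The only cosmetic difference is that the paper quotes the inequality directly in its $L^1$ form (parameters $p=2$, $q=1$), whereas you quote the $L^\infty$ form and then pass to $L^1$ via the factor $R^d$, absorbing it into $C$ and $h_0$.
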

\begin{proof} 
  Let $Y \subset \Omega$ be a finite point set and $u \in \mathcal{H}(K_{\b{\ell}})$ a function that vanishes on $Y$. 
  Let $\norm[0]{\cdot}_{\b{\ell},\Omega}$ denote the norm of the restriction of $\mathcal{H}(K_{\b{\ell}})$ on $\Omega$. 
  By Theorems~4.5 and~6.1 in \citet{RiegerZwicknagl2010} with $p=2$ and $q=1$, there are positive constants $C$ and $h_0$, which depend only on $\ell$, $d$ and $R$, such that
  \begin{equation} \label{eq:sampling-inequality}
    \norm[0]{u}_{L^1(\Omega)} \leq \norm[0]{u}_{\b{\ell},\Omega} \exp\big( C \log (h_{Y,\Omega}) \, h_{Y,\Omega}^{-1} \, \big) \leq \norm[0]{u}_{\b{\ell}} \exp\big( C \log (h_{Y,\Omega}) \, h_{Y,\Omega}^{-1} \, \big) 
  \end{equation}
  if $h_{Y,\Omega} \leq h_0$.
  From the characterisation~\eqref{eq:power-function-wce} of the power function it then follows that
  \begin{equation*}
    \norm[0]{P_{\b{\ell},Y}}_{L^1(\Omega)} \leq \exp\big( C \log (h_{Y,\Omega}) h_{Y,\Omega}^{-1} \, \big).
  \end{equation*}
  and $P_{\b{\ell},X}(\b{x}) \leq P_{\b{\ell},Y}(\b{x})$ for every $\b{x} \in \R^d$ if $Y \subset X$. 
  Therefore
  \begin{equation*}
    \norm[0]{P_{\b{\ell},X}}_{L^1(\Omega)} \leq \norm[0]{P_{\b{\ell},X\cap\Omega}}_{L^1(\Omega)} \leq \exp\big( C \log (h_{X\cap\Omega,\Omega}) h_{X\cap\Omega,\Omega}^{-1} \, \big) = \exp\big( C \log (h_{X,\Omega}) h_{X,\Omega}^{-1} \, \big).
  \end{equation*}
\end{proof}

Next we consider the univariate case and apply Proposition~\ref{prop:rieger} after decomposing the full integration domain $\R$ into a number of disjoint unit intervals and a ``tail domain'' of the form $(-\infty, -a) \cup (a, \infty)$. The full one-dimensional Gaussian integral $I_\alpha(P_{\ell,X})$, which according to~\eqref{eq:int-power-function} is an upper bound to the worst-case error, is then evaluated by summing and appropriately weighting by the Gaussian weight function the $L^1$-norms of the power function on the intervals. If the points are selected in a suitable way, the resulting sum can be explicitly bounded. Section~\ref{sec:kernel-quad-multi} contains extensions for tensor product rules.
There are two principal reasons for using tensor products instead of constructing higher dimensional point sets and applying Proposition~\ref{prop:rieger} directly on them: (i) Proposition~\ref{prop:rieger} is available only for isotropic Gaussian kernels and (ii) in a multivariate version of~\eqref{eq:quasi-uniformity} the constant $c_\textsm{qu}$ in~\eqref{eq:quasi-uniformity} can no longer be independent of $m$ because, unlike in one dimension, the volume of a fixed width annulus depends on its radius.
The structure of a specific point set satisfying the assumptions of Proposition~\ref{prop:kernel-univariate-generic} and Theorem~\ref{thm:kq-univariate} can be seen in Figure~\ref{fig:vdc-2d} which depicts a product grid version.

\begin{proposition} \label{prop:kernel-univariate-generic} Let $(\bar{n}_m)_{m=1}^\infty$ be a strictly increasing sequence of positive integers and $(Y_m)_{m=1}^\infty$ a sequence of sets such that each $Y_m \subset (0,1)$ consists of $\bar{n}_m$ distinct points and the quasi-uniformity condition 
  \begin{equation} \label{eq:quasi-uniformity}
    h_{Y_m,(0,1)} \leq c_\textsm{qu} \bar{n}_m^{-1}
  \end{equation}
  holds for some $c_\textsm{qu} > 0$.
  Let $Y_p^{q,+} \coloneqq \Set{x+q-1}{x \in Y_p}$, $Y_p^{q,-} \coloneqq \Set{x-q}{x \in Y_p}$ and
  \begin{equation*}
    X_k \coloneqq \bigcup_{m=1}^k \big( Y_{k-m+1}^{m,+} \cup Y_{k-m+1}^{m,-} \big),
  \end{equation*}
  so that $n \coloneqq \#X_k = 2 \sum_{m=1}^k \bar{n}_m$.
  If $k \geq c_\textsm{qu} \bar{h}_0^{-1}$ for $\bar{h}_0 \coloneqq \min\{h_0, c_\textsm{qu}^{-1}\}$, then
  \begin{equation} \label{eq:univariate-bound-generic}
    \begin{split} 
      e_{\alpha,\ell}(Q_{\alpha,\ell,X_k}^\textsm{opt}) \leq{}& \exp\bigg(\!-\frac{g(k)^2}{2 \alpha^2} \bigg) \\
      &+ C_1 \sum_{m=k-g(k)+1}^k \exp\Bigg(\!-\bigg[ \frac{(k-m)^2}{2\alpha^2}  + C_2 \bar{n}_m \log(\bar{n}_m) \bigg] \Bigg),
      \end{split}
  \end{equation}
  where $g(k) \coloneqq \floor{k+1-c_\textsm{qu} \bar{h}_0^{-1}}$, the positive constants $C_1$ and $C_2$ are defined in~\eqref{eq:C1C2-constant} and $C$ and $h_0 \leq 1$ are the positive constants in Proposition~\ref{prop:rieger} for $d=1$ and $R=1$.
\end{proposition}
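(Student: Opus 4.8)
The starting point is the power-function bound $e_{\alpha,\ell}(Q_{\alpha,\ell,X_k}^\textsm{opt}) \le I_\alpha(P_{\ell,X_k})$ from \eqref{eq:int-power-function}. The plan is to split $\R$ into the bounded \emph{core} $[-g(k),g(k)]$, on which I control the power function through Proposition~\ref{prop:rieger}, and the \emph{tail} $\Set{\b{x}}{\abs[0]{x} \ge g(k)}$, on which I use only the trivial estimate $P_{\ell,X_k} \le 1$ recorded after \eqref{eq:f-pointwise-RKHS}. On the tail this gives $I_\alpha(P_{\ell,X_k} \mathbbm{1}_{\abs[0]{x} \ge g(k)}) \le I_\alpha(\mathbbm{1}_{\abs[0]{x} \ge g(k)})$, which is the probability $\mathbb{P}(\abs[0]{X} \ge g(k))$ for $X \sim \mathcal{N}(0,\alpha^2)$; the standard sub-Gaussian estimate bounds this by $\exp(-g(k)^2/(2\alpha^2))$, producing the first term of \eqref{eq:univariate-bound-generic}.

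For the core I partition $[-g(k),g(k)]$ into the $2g(k)$ unit intervals $[m-1,m]$ and $[-m,-m+1]$ for $m = 1, \ldots, g(k)$. On $[m-1,m]$ the Gaussian density is maximised at the left endpoint, so
\[
  \frac{1}{\sqrt{2\pi}\alpha} \int_{m-1}^m P_{\ell,X_k}(x) \e^{-x^2/(2\alpha^2)} \dif x \le \frac{\e^{-(m-1)^2/(2\alpha^2)}}{\sqrt{2\pi}\alpha} \norm[0]{P_{\ell,X_k}}_{L^1([m-1,m])},
\]
and the interval $[-m,-m+1]$ yields the same bound since $\abs[0]{-m+1} = m-1$. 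The decisive input is that, by construction, $X_k \cap (m-1,m) = Y_{k-m+1}^{m,+}$, so the fill distance on this interval equals $h_{Y_{k-m+1},(0,1)}$ and is bounded by $c_\textsm{qu} \bar{n}_{k-m+1}^{-1}$ via \eqref{eq:quasi-uniformity}. Because $(\bar{n}_m)$ is a strictly increasing sequence of positive integers, $\bar{n}_p \ge p$; hence for $m \le g(k) = \floor{k+1-c_\textsm{qu} \bar{h}_0^{-1}}$ the index $p = k-m+1 \ge c_\textsm{qu} \bar{h}_0^{-1}$ forces $h_{X_k,[m-1,m]} \le c_\textsm{qu} \bar{n}_p^{-1} \le \bar{h}_0 \le h_0$, which is exactly the hypothesis of Proposition~\ref{prop:rieger} applied with $R=1$ and $d=1$.

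I would then invoke Proposition~\ref{prop:rieger} to obtain $\norm[0]{P_{\ell,X_k}}_{L^1([m-1,m])} \le \exp(C \log(h) h^{-1})$ and bound the exponent from above. Since $t \mapsto C t^{-1} \log t$ is increasing on $(0,1)$ and $h \le c_\textsm{qu} \bar{n}_p^{-1} \le 1$, I may replace $h$ by its upper bound to get $C \log(h) h^{-1} \le C c_\textsm{qu}^{-1} \bar{n}_p (\log c_\textsm{qu} - \log \bar{n}_p)$. Re-indexing by $m \leftarrow p = k-m+1$ (so that $m-1 \leftrightarrow k-m$) turns the core contribution into the sum in \eqref{eq:univariate-bound-generic}: the leading $-\log \bar{n}_p$ supplies the factor $-C_2 \bar{n}_m \log \bar{n}_m$, while the prefactors $2/(\sqrt{2\pi}\alpha)$ and the $\log c_\textsm{qu}$ contribution are absorbed into $C_1$ and into the choice of $C_2$ in \eqref{eq:C1C2-constant}.

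The main obstacle I anticipate is the bookkeeping in the third step: verifying that the cutoff $g(k)$ is precisely the largest interval index for which the fill-distance hypothesis holds, and cleanly extracting $-C_2 \bar{n}_m \log \bar{n}_m$ from $C c_\textsm{qu}^{-1} \bar{n}_p \log(c_\textsm{qu}/\bar{n}_p)$. The latter is immediate when $c_\textsm{qu} \le 1$, but when $c_\textsm{qu} > 1$ the positive term $C c_\textsm{qu}^{-1} \bar{n}_p \log c_\textsm{qu}$ must be dominated by a fixed fraction of $-C c_\textsm{qu}^{-1} \bar{n}_p \log \bar{n}_p$, which holds once $\bar{n}_p$ exceeds a threshold depending only on $c_\textsm{qu}$; this is where the precise constants $C_1, C_2$ in \eqref{eq:C1C2-constant} and the lower bound $\bar{n}_p \ge p \ge c_\textsm{qu} \bar{h}_0^{-1}$ do their work. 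Everything else---the tail estimate and the per-interval Gaussian-weight bound---is routine.
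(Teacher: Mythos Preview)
Your proposal is correct and follows essentially the same approach as the paper: the same power-function bound, the same core/tail split at $\pm g(k)$, the same per-interval Gaussian-weight estimate followed by Proposition~\ref{prop:rieger}, and the same re-indexing. You have also correctly anticipated the one delicate point---absorbing the $\log c_\textsm{qu}$ term---which the paper handles exactly as you describe: from $\bar{h}_0 \le c_\textsm{qu}^{-1}$ one gets $\bar{n}_p \ge c_\textsm{qu}\bar{h}_0^{-1} \ge c_\textsm{qu}^2$, so $\log \bar{n}_p - \log c_\textsm{qu} \ge \tfrac{1}{2}\log \bar{n}_p$, which is why $C_2 = C/(2c_\textsm{qu})$.
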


\begin{proof} Let $C$ and $h_0$ be the positive constants of Proposition~\ref{prop:rieger} for $d=1$ and $R=1$ and note that, trivially, $h_0 \leq 1$ since every set has fill-distance of at most one on the unit interval. Consequently, $\bar{h_0} \leq h_0 \leq 1$.
  Define the open intervals
  \begin{equation*}
    \Omega_q^+ \coloneqq (q-1,q) \quad \text{ and } \quad \Omega_q^- \coloneqq (-q, -q+1),
  \end{equation*}
  so that $Y_p^{q,+} \subset \Omega_q^+$ and $Y_p^{q,-} \subset \Omega_q^-$ for all $p,q \in \N$, and $h_{k,m} \coloneqq h_{X_k, \Omega_m^+} = h_{X_k, \Omega_m^-}$. By~\eqref{eq:quasi-uniformity} and the definition of $X_k$ we have
  \begin{equation} \label{eq:quasi-uniformity2}
    h_{k,m} \leq c_\textsm{qu} \bar{n}_{k-m+1}^{-1}
  \end{equation}
  for all $m,k \in \N$ such that $m \leq k$.
  That is, $h_{k,m} \leq h_0 $ when $\bar{n}_{k-m+1} \geq c_\textsm{qu} \bar{h}_0^{-1}$.
  Because $(\bar{n}_m)_{m=1}^\infty$ is a strictly increasing integer sequence such that $\bar{n}_1 \geq 1$, it holds that $\bar{n}_m \geq m$.
  Hence $\bar{n}_{k-m+1} \geq c_\textsm{qu} \bar{h}^{-1}_0$ holds at least when $m \leq g(k) = \floor{k+1-c_\textsm{qu} \bar{h}_0^{-1}}$. Under the assumption $k \geq c_\textsm{qu}\bar{h}_0^{-1}$ we have $g(k) \geq 1$, which means that the sums below are not empty.
  Recall then~\eqref{eq:int-power-function} and decompose the integration domain in the following way:
  \begin{equation} \label{eq:domain-decomposition}
    \begin{split}
      e_{\alpha,\ell}(Q_{\alpha,\ell,X_k}^\textsm{opt}) \leq{}& \frac{1}{\sqrt{2\pi} \alpha} \int_\R P_{\ell,X_k}(x) \exp\bigg(\!-\frac{x^2}{2\alpha^2} \bigg) \dif x \\
    ={}& \sum_{m=1}^{g(k)} \underbrace{ \frac{1}{\sqrt{2\pi} \alpha} \int_{\Omega_m^+ \cup \Omega_m^-} P_{\ell,X_k}(x) \exp\bigg(\!-\frac{x^2}{2\alpha^2} \bigg) \dif x }_{\eqqcolon \varepsilon(m)} \\
    &+ \underbrace{ \frac{1}{\sqrt{2\pi} \alpha} \int_{\R \setminus [-g(k), g(k)]} P_{\ell,X_k}(x) \exp\bigg(\!-\frac{x^2}{2\alpha^2} \bigg) \dif x }_{\eqqcolon \rho(g(k))}.
    \end{split}
  \end{equation}
  To estimate $\varepsilon(m)$, first use the facts that $\exp(-x^2/(2\alpha^2)) \leq \exp(-(m-1)^2/(2\alpha^2))$ on $\Omega_m^+$ and $\Omega_m^-$ and $h_{k,m} = h_{X_k,\Omega_m^+} = h_{X_k,\Omega_m^-}$ and then apply Proposition~\ref{prop:rieger}:
  \begin{equation*}
    \begin{split}
      \varepsilon(m) &\leq \frac{1}{\sqrt{2\pi} \alpha} \exp\bigg(\!-\frac{(m-1)^2}{2\alpha^2} \bigg)  \big( \norm[0]{P_{\ell, X_k}}_{L^1(\Omega_m^+)} + \norm[0]{P_{\ell, X_k}}_{L^1(\Omega_m^-)} \big) \\
        &\leq \frac{\sqrt{2}}{\sqrt{\pi} \alpha} \exp\bigg(\!-\frac{(m-1)^2}{2\alpha^2} \bigg) \exp\big( C \log(h_{k,m}) \, h_{k,m}^{-1} \big).
      \end{split}
  \end{equation*}
  As the function $x \mapsto \log(x)x^{-1}$ is increasing on $(0, \e]$, it follows from~\eqref{eq:quasi-uniformity2} that $\log(h_{k,m}) \, h_{k,m}^{-1} \leq c_\textsm{qu}^{-1} \bar{n}_{k-m+1} [ \log(c_\textsm{qu}) - \log(\bar{n}_{k-m+1})]$ if $\bar{n}_{k-m+1} \geq c_\textsm{qu} \e^{-1}$, which holds at least if $m \leq \floor{k+1 - c_\textsm{qu} \e^{-1}}$ because $\bar{n}_m \geq m$. Since $\bar{h}_0 \leq 1$, this is implied by $m \leq g(k)$. Furthermore, $\log(\bar{n}_{k-m+1}) - \log(c_\textsm{qu}) \geq \frac{1}{2} \log( \bar{n}_{k-m+1} )$ because $\bar{n}_m \geq m$, $m \leq \floor{k+1-c_\textsm{qu} \bar{h}_0^{-1}}$ and $\bar{h}_0 = \min\{h_0, c_\textsm{qu}^{-1}\} \leq c_\textsm{qu}^{-1}$. Hence
  \begin{equation*}
    \begin{split}
      \varepsilon(m) &\leq \frac{\sqrt{2}}{\sqrt{\pi} \alpha} \exp\Bigg(\!-\bigg[ \frac{(m-1)^2}{2\alpha^2}  + C  c_\textsm{qu}^{-1} \bar{n}_{k-m+1} \big[ \log(\bar{n}_{k-m+1}) - \log(c_\textsm{qu})\big] \bigg] \Bigg) \\
      &\leq C_1 \exp\Bigg(\!-\bigg[ \frac{(m-1)^2}{2\alpha^2}  + C_2  \bar{n}_{k-m+1} \log(\bar{n}_{k-m+1}) \bigg] \Bigg)
      \end{split}
  \end{equation*}
when $m \leq g(k)$, where
  \begin{equation} \label{eq:C1C2-constant}
    C_1 \coloneqq \frac{\sqrt{2} }{\sqrt{\pi} \alpha} \quad \text{ and } \quad C_2 \coloneqq \frac{C}{2 c_\textsm{qu}}.
  \end{equation}
  Therefore,
  \begin{equation} \label{eq:err-sum-bound}
    \begin{split}
      \sum_{m=1}^{g(k)} \varepsilon(m) &\leq C_1 \sum_{m=1}^{g(k)} \exp\Bigg(\!-\bigg[ \frac{(m-1)^2}{2\alpha^2}  + C_2 \bar{n}_{k-m+1} \log(\bar{n}_{k-m+1}) \bigg] \Bigg) \\
      &= C_1 \sum_{m=k-g(k)+1}^k \exp\Bigg(\!-\bigg[ \frac{(k-m)^2}{2\alpha^2}  + C_2 \bar{n}_m \log(\bar{n}_m) \bigg] \Bigg).
      \end{split}
  \end{equation}
  Since $P_{\ell,X_k} \leq 1$ by~\eqref{eq:f-pointwise-RKHS}, the remainder term in~\eqref{eq:domain-decomposition} admits the bound
  \begin{equation*}
    \begin{split}
      \rho(g(k)) &= \frac{1}{\sqrt{2\pi} \alpha} \int_{\R \setminus [-g(k), g(k)]} P_{\ell,X_k}(x) \exp\bigg(\!-\frac{x^2}{2\alpha^2} \bigg) \dif x \\
      &\leq \frac{1}{\sqrt{2\pi} \alpha} \int_{\R \setminus [-g(k), g(k)]} \exp\bigg(\!-\frac{x^2}{2\alpha^2} \bigg) \dif x \\
      &= \mathrm{erfc}\bigg( \frac{g(k)}{\sqrt{2} \alpha} \bigg),
      \end{split}
  \end{equation*}
  where $\mathrm{erfc}(x) \coloneqq 2\pi^{-1/2} \int_x^\infty \exp(-t^2) \dif t$ is the complementary error function. Using the standard estimate $\mathrm{erfc}(x) \leq \exp(-x^2)$ we thus obtain the bound
  \begin{equation} \label{eq:rem-bound}
    \rho(g(k)) \leq \exp\bigg(\!-\frac{g(k)^2}{2 \alpha^2} \bigg).
  \end{equation}
  The claim of the theorem follows by inserting the estimates~\eqref{eq:err-sum-bound} and~\eqref{eq:rem-bound} into~\eqref{eq:domain-decomposition}.
\end{proof}

The main result of this section is obtained by selecting the cardinalities of the sets $Y_m$ in Proposition~\ref{prop:kernel-univariate-generic} so as to make derivation of an explicit upper bound feasible.

\begin{theorem} \label{thm:kq-univariate} Consider the point sets $X_k$ in Proposition~\ref{prop:kernel-univariate-generic} and set $\bar{n}_m = m$. Let $\bar{h}_0$ and $c_\textsm{qu}$ be the positive constants in Proposition~\ref{prop:kernel-univariate-generic} and $n = \#X_k$. Then there is a positive constant $C$, which depends only on $\ell$, $\alpha$ and $c_\textsm{qu}$, such that
  \begin{equation} \label{eq:kq-wce-bound-1d}
    e_{\alpha,\ell}( Q_{\alpha,\ell,X_k}^\textsm{opt} ) \leq C  \exp\bigg(\!- \frac{\sqrt{n}}{2 \sqrt{2} \alpha^2} \bigg) 
  \end{equation}
  whenever $k \geq 2 c_\textsm{qu} \bar{h}_0^{-1}$. 
\end{theorem}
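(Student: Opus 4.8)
The plan is to specialize Proposition~\ref{prop:kernel-univariate-generic} to $\bar{n}_m = m$, for which $n = \#X_k = 2\sum_{m=1}^k m = k(k+1)$, and then to show that both terms on the right-hand side of~\eqref{eq:univariate-bound-generic} are bounded by a constant multiple of $\exp(-\sqrt n/(2\sqrt 2\alpha^2))$. A useful preliminary observation is that the hypothesis $k \ge 2c_\textsm{qu}\bar{h}_0^{-1}$ forces $g(k) \ge k - c_\textsm{qu}\bar{h}_0^{-1} \ge k/2$, so that $\sqrt n = \sqrt{k(k+1)}$ and $g(k)$ are both of order $k$; in fact the genuine decay of the sum turns out to be faster than the stated rate, and this slack is what makes the estimates robust.

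For the remainder term I would use $g(k) \ge k/2$ to get $g(k)^2 \ge k^2/4$, while $\sqrt n/\sqrt 2 \le (k+1)/\sqrt 2$ by the arithmetic–geometric mean inequality. Hence $g(k)^2/(2\alpha^2) \ge \sqrt n/(2\sqrt 2\alpha^2)$ as soon as $k^2/4 \ge (k+1)/\sqrt 2$, which holds for all but finitely many $k$, giving $\exp(-g(k)^2/(2\alpha^2)) \le \exp(-\sqrt n/(2\sqrt 2\alpha^2))$ for large $k$. The finitely many smaller $k \ge 2c_\textsm{qu}\bar{h}_0^{-1}$ are absorbed into the final constant $C$.

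The core of the argument is the sum. After the substitution $j = k - m$ the sum in~\eqref{eq:univariate-bound-generic} becomes $\sum_{j=0}^{g(k)-1}\exp(-F(j))$ with $F(j) \coloneqq j^2/(2\alpha^2) + C_2(k-j)\log(k-j)$, the constant $C_2$ being as in~\eqref{eq:C1C2-constant}. I would establish the uniform lower bound $F(j) \ge \sqrt n/(2\sqrt 2\alpha^2) + j^2/(4\alpha^2)$ by splitting on the size of $j$: when $j \ge 2^{1/4} n^{1/4}$ the term $j^2/(4\alpha^2)$ alone exceeds $\sqrt n/(2\sqrt 2\alpha^2)$ while the log term is nonnegative since $k - j \ge 1$; when $j < 2^{1/4}n^{1/4}$ one has $k - j \ge k/2$ for large $k$, so $C_2(k-j)\log(k-j) \ge C_2(k/2)\log(k/2) \ge \sqrt n/(2\sqrt 2\alpha^2)$ provided $\log(k/2)$ exceeds a fixed constant depending only on $\alpha$ and $C_2$, that is, for all sufficiently large $k$. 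Feeding this lower bound into the sum yields $\sum_j\exp(-F(j)) \le \exp(-\sqrt n/(2\sqrt 2\alpha^2))\sum_{j=0}^\infty \exp(-j^2/(4\alpha^2))$, where the remaining series converges to a finite constant depending only on $\alpha$. Combining this with the remainder bound and absorbing the finitely many small values of $k$ gives the claim.

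The main obstacle is the two-regime lower bound on $F(j)$: one must choose the crossover threshold (of order $n^{1/4}$) so that the quadratic term dominates above it while the $m\log m$ term dominates below it, and then check that the constant $1/(2\sqrt 2)$ survives both estimates simultaneously. Retaining the spare $j^2/(4\alpha^2)$ in the lower bound is essential, since without it one would only obtain $F(j) \ge \sqrt n/(2\sqrt 2\alpha^2)$, and the resulting factor of $g(k) \approx k$ in front of the exponential could not be absorbed into a constant. The verifications that $k - j \ge k/2$ in the small-$j$ regime and that $\log(k/2)$ eventually beats the relevant constant are routine, but they must be tracked to pin down the threshold on $k$ hidden in the final constant $C$.
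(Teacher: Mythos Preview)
Your argument is correct and follows the same overall strategy as the paper: specialize Proposition~\ref{prop:kernel-univariate-generic} with $\bar n_m=m$, use $g(k)\ge k/2$ to control the tail term, and show the sum on the right of~\eqref{eq:univariate-bound-generic} is $O\bigl(\exp(-\sqrt n/(2\sqrt2\,\alpha^2))\bigr)$, absorbing finitely many small $k$ into the constant.

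The only substantive difference is in how the sum is bounded. The paper avoids your two-regime split by the elementary inequality $(k-m)^2\ge k-m$ (valid for integers $0\le m\le k$), which after writing $k-m = k - m$ lets one factor $\exp\bigl(-k/(2\alpha^2)\bigr)$ out of every term and leaves the residual series $\sum_{m\ge1}\exp\bigl(-m[C_2\log m-1/(2\alpha^2)]\bigr)$; this series is bounded by a constant independent of $k$ since its terms are eventually super-exponentially decreasing. Combined with $k\ge\sqrt{n/2}$ this gives the claim directly. Their device is shorter and sidesteps the crossover threshold you introduce; your approach, on the other hand, keeps an explicit $j^2/(4\alpha^2)$ in the residual, so the convergence of the remaining sum over $j$ is immediate without appealing to the super-exponential decay of $m\log m$. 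Both arrive at exactly the same rate.
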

\begin{proof} With $\bar{n}_m = m$ we have $n = \#X_k = 2 \sum_{m=1}^k \bar{n}_m = 2 \sum_{m=1}^k m = k(k+1) \leq 2k^2$.
  Let $\bar{h}_0$ and $c_\textsm{qu}$ be the positive constants from Proposition~\ref{prop:kernel-univariate-generic} and suppose that $k$ is large enough that $k-c_\textsm{qu} \bar{h}_0^{-1} \geq k / 2$ (i.e., $k \geq 2c_\textsm{qu} \bar{h}_0^{-1}$). Then
  \begin{equation*}
    \begin{split}
      \exp\bigg(\!-\frac{g(k)^2}{2 \alpha^2} \bigg) = \exp\bigg(\!-\frac{(\floor{k+1-c_\textsm{qu} \bar{h}_0^{-1}})^2}{2 \alpha^2} \bigg) &\leq \exp\bigg(\!-\frac{(k-c_\textsm{qu} \bar{h}_0^{-1})^2}{2 \alpha^2} \bigg) \\
      &\leq \exp\bigg(\! - \frac{k^2}{8 \alpha^2} \bigg) \\
      &\leq \exp\bigg(\! - \frac{n}{16 \alpha^2} \bigg).
      \end{split}
  \end{equation*}
  Furthermore,
  \begin{equation*}
    \begin{split}
      \sum_{m=k-g(k)+1}^k &\exp\Bigg(\!-\bigg[ \frac{(k-m)^2}{2\alpha^2}  + C_2 \bar{n}_m \log(\bar{n}_m) \bigg] \Bigg) \\
      &\leq \sum_{m=1}^k \exp\Bigg(\!-\bigg[ \frac{(k-m)^2}{2\alpha^2}  + C_2 \bar{n}_m \log(\bar{n}_m) \bigg] \Bigg) \\
      &\leq \sum_{m=1}^k \exp\Bigg(\!-\bigg[ \frac{k-m^2}{2\alpha^2}  + C_2 \bar{n}_m \log(\bar{n}_m) \bigg] \Bigg) \\
      &\leq \exp\bigg(\!-\frac{k}{2\alpha^2} \bigg) \sum_{m=1}^k \exp\Bigg(\!-\bigg[ C_2 \bar{n}_m \log(\bar{n}_m) -\frac{m}{2\alpha^2} \bigg] \Bigg) \\
      &\leq \exp\bigg(\!-\frac{k}{2\alpha^2} \bigg) \sum_{m=1}^k \exp\Bigg(\!-m\bigg[ C_2 \log(m) -\frac{1}{2\alpha^2} \bigg] \Bigg).
      \end{split}
  \end{equation*}
  Because the exponent in the sum is negative when $\log(m) \geq (2C_2\alpha^2)^{-1}$ and for such $m$ the terms in the sum decay super-exponentially, we conclude that there is $C_3 > 0$, which depends only on $\ell$, $\alpha$ and $c_\textsm{qu}$, such that
  \begin{equation} \label{eq:univariate-bound2}
    \begin{split}
      \sum_{m=k-g(k)+1}^k \exp\Bigg(\!-\bigg[ \frac{(k-m)^2}{2\alpha^2}  + C_2 \bar{n}_m \log(\bar{n}_m) \bigg] \Bigg) &\leq C_3 \exp\bigg(\!-\frac{k}{2\alpha^2}\bigg) \\
      &\leq C_3 \exp\bigg(\!-\frac{\sqrt{n}}{2\sqrt{2} \alpha^2} \bigg) .
      \end{split}
  \end{equation}
  Upon insertion of the estimates above into~\eqref{eq:univariate-bound-generic} it is seen that~\eqref{eq:univariate-bound2} dominates the estimate. This yields the claim.
\end{proof}

The bound~\eqref{eq:kq-wce-bound-1d} is worse than the bound~\eqref{eq:gh-1d} for scaled Gauss--Hermite rules and the bounds obtained in \citet{KuoWozniakowski2012} and \citet{KuoSloanWozniakowski2017} for standard Gauss--Hermite rules.
We partly attribute this to the sub-optimal selection, done out of convenience, of the points $X_k$;
given that the Gaussian weight function decays super-exponentially, one would expect that the points should be more concentrated at the origin.
Moreover, the bound~\eqref{eq:rieger} on which the results are based is potentially sub-optimal and the locally quasi-uniform point sets we are using are likely not suitable for approximating analytic functions~\citep{Platte2005,Platte2011,PlatteTrefethen2011}.
As is evident from Figure~\ref{fig:wce-2d}, the estimates used in the proofs of Proposition~\ref{prop:kernel-univariate-generic} and Theorem~\ref{thm:kq-univariate} appear to be somewhat rough.
Nevertheless, this second integration rule we have proposed enjoys substantial flexibility with respect to the choice of the point set, in particular it admits sequences of nested point sets for an extensible treatment.

\begin{figure}[th!]
  \centering
  \includegraphics{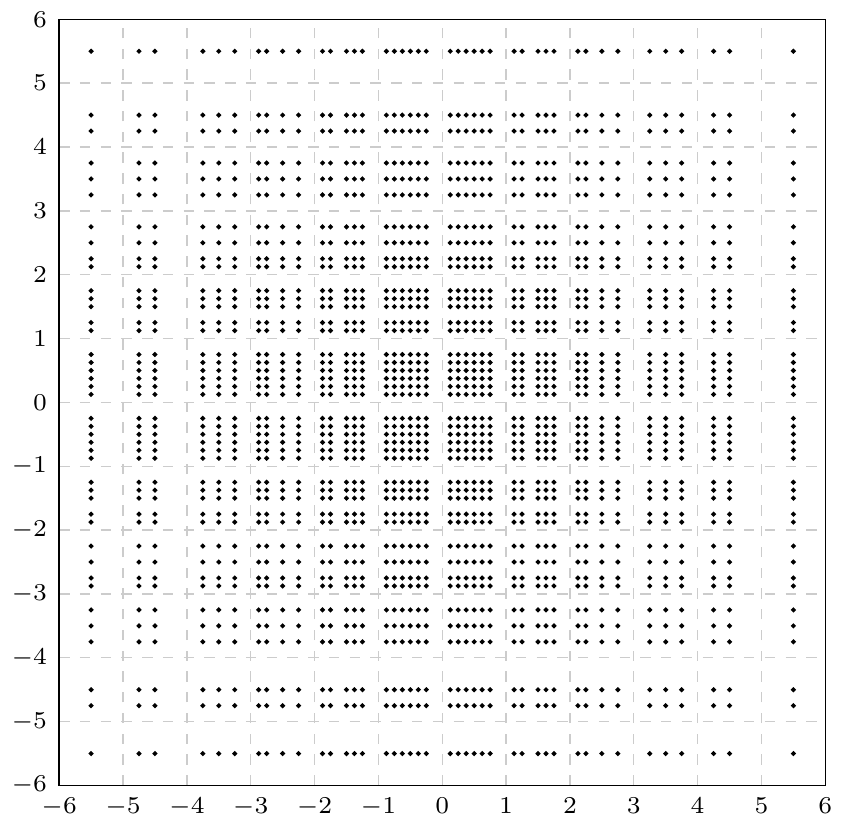}
  \caption{The point set $X_{\b{k}}$ in $\R^2$ with $\b{k} = (6,6)$ and $Y_m$ consisting of the first $m$ points in the van der Corput sequence. The total number of points is 1764.}
  \label{fig:vdc-2d}
\end{figure}

\subsection{Error estimates for tensor product rules} \label{sec:kernel-quad-multi}

In this section we consider the multivariate Gaussian kernel~\eqref{eq:gauss-kernel}, with length-scale parameter $\b{\ell}$.
Let $X_k$ be the point sets constructed in Theorem~\ref{thm:kq-univariate}.
For $\b{k} \in \N^d$ define the product grid
\begin{equation} \label{eq:tensor-grid-kq}
  X_{\b{k}} \coloneqq X_{k_1} \times \cdots \times X_{k_d}.
\end{equation}
This set consists of $N \coloneqq \#X_{\b{k}} = \prod_{i=1}^d \#X_{k_i} = \prod_{i=1}^d k_i(k_i+1)$ points.

\begin{theorem} \label{thm:kq-tensor} Consider the product grid $X_{\b{k}}$ defined in~\eqref{eq:tensor-grid-kq}. Let $\bar{h}_0$ and $c_\textsm{qu}$ be the positive constants in Proposition~\ref{prop:kernel-univariate-generic}. and $n_i \coloneqq \#X_{k_i}$. Then, for $i=1,\ldots,d$, there are positive constants $C_i$, each of which only depends on $\ell_i$, $\alpha_i$ and $c_\textsm{qu}$, such that
  \begin{equation*}
    e_{\b{\alpha}, \b{\ell}}( Q_{\b{\alpha}, \b{\ell}, X_{\b{k}}}^\textsm{opt} ) \leq \sum_{i=1}^d C_i \Bigg[ \prod_{j \neq i} \bigg(1 + \frac{2\alpha_j^2}{\ell_j^2} \bigg)^{-1/4} \Bigg]  \exp\bigg(\! -\frac{\sqrt{n_i}}{2\sqrt{2}\alpha_i^2} \bigg) 
  \end{equation*}
  whenever $k_i \geq 2 c_\textsm{qu} \bar{h}_0^{-1}$ for every $i=1,\ldots,d$.
\end{theorem}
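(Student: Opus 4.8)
The plan is to reproduce, essentially verbatim, the telescoping argument from the proof of Theorem~\ref{thm:gh-tensor}, substituting the scaled Gauss--Hermite objects by their optimal-weight analogues. The single structural fact that must be secured first is that, for the product grid $X_{\b{k}} = X_{k_1} \times \cdots \times X_{k_d}$ and the product kernel $K_{\b{\ell}} = \prod_{i=1}^d K_{\ell_i}$, the optimal representer factorises as
\begin{equation*}
  \mathcal{Q}_{\b{\alpha}, \b{\ell}, X_{\b{k}}}^\textsm{opt} = \prod_{i=1}^d \mathcal{Q}_{\alpha_i, \ell_i, X_{k_i}}^\textsm{opt}.
\end{equation*}
This is because~\eqref{eq:wce-kq} exhibits $\mathcal{Q}_{\b{\alpha}, \b{\ell}, X_{\b{k}}}^\textsm{opt}$ as the $\mathcal{H}(K_{\b{\ell}})$-orthogonal projection of $\mathcal{I}_{\b{\alpha}, \b{\ell}}$ onto $\operatorname{span}\Set{K_{\b{\ell}}(\cdot, \b{x})}{\b{x} \in X_{\b{k}}}$. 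Since $\mathcal{I}_{\b{\alpha}, \b{\ell}} = \prod_{i=1}^d \mathcal{I}_{\alpha_i, \ell_i}$ is a tensor product and, on a product grid, that spanning set is the tensor product of the univariate spans $\operatorname{span}\Set{K_{\ell_i}(\cdot, x)}{x \in X_{k_i}}$, the projection of a tensor product onto a tensor-product subspace decouples into the univariate projections, giving the displayed factorisation.

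With the factorisation in hand, I would first record the contraction bound, which follows at once from~\eqref{eq:wce-kq} and Lemma~\ref{lemma:representers}: because $\norm[0]{\mathcal{Q}_{\alpha_i, \ell_i, X_{k_i}}^\textsm{opt}}_{\ell_i}^2 = \norm[0]{\mathcal{I}_{\alpha_i, \ell_i}}_{\ell_i}^2 - e_{\alpha_i, \ell_i}(Q_{\alpha_i, \ell_i, X_{k_i}}^\textsm{opt})^2$, one has
\begin{equation*}
  \norm[0]{\mathcal{Q}_{\alpha_i, \ell_i, X_{k_i}}^\textsm{opt}}_{\ell_i} \leq \norm[0]{\mathcal{I}_{\alpha_i, \ell_i}}_{\ell_i} = \bigg(1 + \frac{2\alpha_i^2}{\ell_i^2}\bigg)^{-1/4}.
\end{equation*}
Next, exactly as in Theorem~\ref{thm:gh-tensor}, I would introduce the truncated representers $\mathcal{I}_{\b{\alpha}, \b{\ell}}[q]$ and $\mathcal{Q}_{\b{\alpha}, \b{\ell}, X_{\b{k}}}^\textsm{opt}[q]$ on the first $q-1$ coordinates and split off the last coordinate,
\begin{equation*}
  \mathcal{I}_{\b{\alpha}, \b{\ell}} - \mathcal{Q}_{\b{\alpha}, \b{\ell}, X_{\b{k}}}^\textsm{opt} = \mathcal{I}_{\b{\alpha}, \b{\ell}}[d]\big(\mathcal{I}_{\alpha_d, \ell_d} - \mathcal{Q}_{\alpha_d, \ell_d, X_{k_d}}^\textsm{opt}\big) + \mathcal{Q}_{\alpha_d, \ell_d, X_{k_d}}^\textsm{opt}\big(\mathcal{I}_{\b{\alpha}, \b{\ell}}[d] - \mathcal{Q}_{\b{\alpha}, \b{\ell}, X_{\b{k}}}^\textsm{opt}[d]\big),
\end{equation*}
which is legitimate precisely because of the factorisation above.

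Taking $\b{\ell}$-norms, applying the triangle inequality, using $\norm[0]{\mathcal{I}_{\b{\alpha}, \b{\ell}}[d]}_{\b{\ell}_{1:d}} = \prod_{j=1}^{d-1}(1 + 2\alpha_j^2/\ell_j^2)^{-1/4}$ together with the contraction bound, and identifying each univariate difference-norm with the univariate worst-case error via~\eqref{eq:wce-kq}, the iteration over the $d$ coordinates telescopes into
\begin{equation*}
  e_{\b{\alpha}, \b{\ell}}(Q_{\b{\alpha}, \b{\ell}, X_{\b{k}}}^\textsm{opt}) \leq \sum_{i=1}^d e_{\alpha_i, \ell_i}(Q_{\alpha_i, \ell_i, X_{k_i}}^\textsm{opt}) \prod_{j \neq i} \bigg(1 + \frac{2\alpha_j^2}{\ell_j^2}\bigg)^{-1/4}.
\end{equation*}
The proof then concludes by inserting the univariate estimate~\eqref{eq:kq-wce-bound-1d} of Theorem~\ref{thm:kq-univariate}, which is valid exactly under the hypothesis $k_i \geq 2 c_\textsm{qu} \bar{h}_0^{-1}$ and contributes the factor $C_i \exp(-\sqrt{n_i}/(2\sqrt{2}\alpha_i^2))$ for each $i$. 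I expect the only genuine obstacle to be the clean justification of the tensor-product factorisation of the optimal representer; once that is settled, everything downstream is the identical bookkeeping already carried out for Theorem~\ref{thm:gh-tensor}, and a useful sanity check is to confirm that the norm appearing after the telescoping is the full-line quantity $\norm[0]{\mathcal{I}_{\alpha_i, \ell_i} - \mathcal{Q}_{\alpha_i, \ell_i, X_{k_i}}^\textsm{opt}}_{\ell_i}$ rather than any restricted-domain surrogate, so that~\eqref{eq:wce-kq} applies verbatim.
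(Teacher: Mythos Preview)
Your proposal is correct and follows essentially the same route as the paper: the paper's proof simply says ``Proceeding as in the proof of Theorem~\ref{thm:gh-tensor}'' to obtain the telescoped sum, then invokes~\eqref{eq:wce-kq} for the contraction $\norm[0]{\mathcal{Q}_{\alpha,\ell,X}^\textsm{opt}}_\ell \leq \norm[0]{\mathcal{I}_{\alpha,\ell}}_\ell$, Lemma~\ref{lemma:representers} for the value of $\norm[0]{\mathcal{I}_{\alpha,\ell}}_\ell$, and Theorem~\ref{thm:kq-univariate} for the univariate errors. Your added justification of the tensor-product factorisation of $\mathcal{Q}_{\b{\alpha}, \b{\ell}, X_{\b{k}}}^\textsm{opt}$ is a point the paper leaves implicit, and your argument via orthogonal projection onto a tensor-product subspace is a clean way to secure it.
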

\begin{proof} Proceeding as in the proof of Theorem~\ref{thm:gh-tensor} yields
    \begin{equation*}
    \begin{split}
      e_{\b{\alpha}, \b{\ell}}( Q_{\b{\alpha}, \b{\ell}, X_{\b{k}}}^\textsm{opt} ) &\leq \sum_{i=1}^d e_{\alpha_i,\ell_i}( Q_{\alpha_i,\ell_i,X_{k_i}}^\textsm{opt} ) \norm[0]{\mathcal{I}_{\b{\alpha}, \b{\ell}}(i)}_{\b{\ell}(1:i)} \prod_{j=i+1}^{d} \norm[0]{ \mathcal{Q}_{\alpha_j,\ell_j,X_{k_j}}^\textsm{opt} }_{\ell_j}.
      \end{split}
    \end{equation*}
    From~\eqref{eq:wce-kq} it follows that $\norm[0]{ \mathcal{Q}_{\alpha,\ell,X}^\textsm{opt} }_{\ell} \leq \norm[0]{\mathcal{I}_{\alpha,\ell}}_\ell$ for any $\alpha, \ell > 0$ and any point set $X \subset \R$.
    Estimates in Theorem~\ref{thm:kq-univariate} and Lemma~\ref{lemma:representers} for the worst-case errors and the norms of the integral representers, respectively, yield the claim.
\end{proof}

\begin{figure}[t!]
  \centering
  \includegraphics{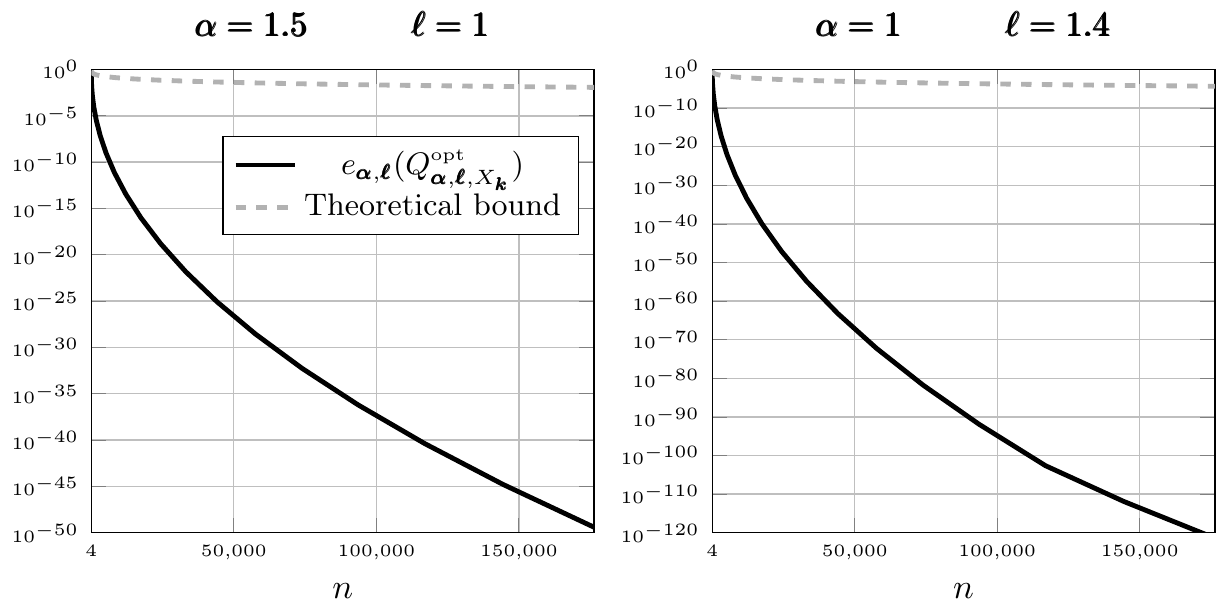}
  \caption{True worst-case error and the upper bound~\eqref{eq:kq-isotropic-bound} (with $C=1$) in the isotropic setting for $d=2$ and $k=1,\ldots,20$. As in Figure~\ref{fig:vdc-2d}, $Y_m$ consists of the $m$ first in the van der Corput sequence. All computations were implemented in Python with 400-digit precision.} \label{fig:wce-2d}
\end{figure}

In the isotropic case the statement simplifies to the statement in Corollary \ref{cor:kq-tensor}:

\begin{corollary} \label{cor:kq-tensor} Consider the product grid $X_{\b{k}}$ defined in~\eqref{eq:tensor-grid-kq}. Let \sloppy{${\alpha_1 = \cdots = \alpha_d = \alpha}$}, $\ell_1 = \cdots = \ell_d = \ell$ and \sloppy{${k_1 = \cdots = k_d = k}$} for $\alpha,\ell>0$ and $k \geq 1$. Let $\bar{h}_0$ and $c_\textsm{qu}$ be the positive constants in Proposition~\ref{prop:kernel-univariate-generic} and $n = \#X_k = k(k+1)$. Then there is a positive constant $C$, which only depends on $\ell$, $\alpha$, $d$ and $c_\textsm{qu}$, such that
  \begin{equation} \label{eq:kq-isotropic-bound}
    e_{\b{\alpha}, \b{\ell}}( Q_{\b{\alpha}, \b{\ell}, X_{\b{k}}}^\textsm{opt} ) \leq C  \exp\bigg(\! -\frac{\sqrt{n}}{2\sqrt{2}\alpha^2} \bigg) 
  \end{equation}
  whenever $k \geq 2c_\textsm{qu} \bar{h}_0^{-1}$.
\end{corollary}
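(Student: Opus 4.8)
The plan is to obtain this corollary as a direct specialization of Theorem~\ref{thm:kq-tensor} to the isotropic setting, so that the only genuine work is collecting constants. First I would apply Theorem~\ref{thm:kq-tensor} with $\alpha_i = \alpha$, $\ell_i = \ell$ and $k_i = k$ for every $i = 1, \ldots, d$. Since $k_i = k$, each univariate point set $X_{k_i}$ equals $X_k$, so $n_i = \#X_{k_i} = \#X_k = k(k+1) = n$ for all $i$. Likewise, because each constant $C_i$ in Theorem~\ref{thm:kq-tensor} depends only on $\ell_i$, $\alpha_i$ and $c_\textsm{qu}$, and these coincide across coordinates in the isotropic case, all the $C_i$ reduce to a single constant $C'$ depending only on $\ell$, $\alpha$ and $c_\textsm{qu}$.

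Next I would simplify the bound. Each summand of Theorem~\ref{thm:kq-tensor} contains the product $\prod_{j \neq i} (1 + 2\alpha_j^2/\ell_j^2)^{-1/4}$, which in the isotropic case has $d-1$ identical factors and equals $(1 + 2\alpha^2/\ell^2)^{-(d-1)/4}$, independent of $i$. Consequently the upper bound collapses to
\[
  e_{\b{\alpha}, \b{\ell}}( Q_{\b{\alpha}, \b{\ell}, X_{\b{k}}}^\textsm{opt} ) \leq d \, C' \bigg(1 + \frac{2\alpha^2}{\ell^2}\bigg)^{-(d-1)/4} \exp\bigg(\!-\frac{\sqrt{n}}{2\sqrt{2}\alpha^2}\bigg),
\]
and I would set $C \coloneqq d \, C' (1 + 2\alpha^2/\ell^2)^{-(d-1)/4}$, which depends only on $\ell$, $\alpha$, $d$ and $c_\textsm{qu}$, as claimed. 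Finally, I would observe that the hypothesis $k_i \geq 2c_\textsm{qu}\bar{h}_0^{-1}$ for every $i$ in Theorem~\ref{thm:kq-tensor} is exactly the single condition $k \geq 2c_\textsm{qu}\bar{h}_0^{-1}$ stated here, so no further assumptions are needed.

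There is no substantive obstacle here: the argument is pure bookkeeping once Theorem~\ref{thm:kq-tensor} is available. The only point warranting the slightest care is confirming that the dependence of the collapsed constant $C$ stays confined to $\ell$, $\alpha$, $d$ and $c_\textsm{qu}$ — in particular that absorbing the factor $d$ and the power of $(1 + 2\alpha^2/\ell^2)$ introduces no hidden dependence on $k$ or $n$, which it does not, since both are fixed numerical factors once the parameters are isotropic.
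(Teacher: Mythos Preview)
Your proposal is correct and matches the paper's approach: the corollary is stated immediately after Theorem~\ref{thm:kq-tensor} as the isotropic specialization, with no separate proof given. Your bookkeeping of the constants and the collapse of the sum to $d$ identical terms is exactly what is needed.
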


Because $N = \#X_{\b{k}} = n^d$, in terms of the total number of points this bound is
\begin{equation*}
  e_{\b{\alpha}, \b{\ell}}( Q_{\b{\alpha}, \b{\ell}, X_{\b{k}}}^\textsm{opt} ) \leq C \,  \exp\bigg(\! -\frac{N^{1/(2d)}}{2\sqrt{2}\alpha^2} \bigg) ,
\end{equation*}
which, like ~\eqref{eq:Nd-GH}, shows that for large $d$ one should expect slower convergence.
Figure~\ref{fig:wce-2d} shows that the above error bounds are very conservative.

\begin{remark}
    To the best of our knowledge no lower bounds from which a counterpart to Proposition~\ref{prop:rieger} could be derived have been established. The only lower bound we can supply follows from Theorem~\ref{thm:nth-minimal-d}. In the isotropic setting of Corollary~\ref{cor:kq-tensor} with $N+1 = (n+1)^d$ the bound is
    \begin{equation*}
      e_{\b{\alpha}, \b{\ell}}( Q_{\b{\alpha}, \b{\ell}, X_{\b{k}}}^\textsm{opt} ) \geq \bar{C}_n(\gamma)^d (N+1)^{-1} \bigg( \frac{\omega_{\gamma} \e}{4n} \bigg)^{dn} \geq \bar{C}_n(\gamma)^d c^{-(N+1)^{1/d}} (N+1)^{-((N+1)^{1/d}+1)},
    \end{equation*}
    where $c = (4/(\omega_\gamma \e))^d > 1$ and $\gamma$, $\omega_\gamma$ and $\bar{C}_n(\gamma)$ are defined in Theorem~\ref{thm:nth-minimal-1d}.
\end{remark}

\section{Conclusions and discussion}
 
We constructed two classes of integration rules for integration of functions in reproducing kernel Hilbert spaces of Gaussian kernels defined on $\R^d$.
For the first class of methods, those based on suitable scaling of Gauss--Hermite rules, we derived upper and lower bounds on the worst-case integration error. In dimension $d$, the lower bounds are of the form $\exp(-c_1 N^{1/d}) N^{1/(4d)}$ and upper bounds of the form $\exp(-c_2 N^{1/d}) N^{-1/(4d)}$, where $N$ is the total number of points and $c_1 > c_2$ are positive constants.
In contrast to integration rules analysed in previous work, the bounds are valid for any variance parameter of the integration density and length-scale parameter of the kernel.
Our second construction used optimal weights for points that can be taken as a nested sequence. In this case we proved an upper bound for the worst-case error of the form $\exp(-c_3 N^{1/(2d)})$ for a constant $c_3 > 0$.
Several improvements and extensions are possible:
\begin{itemize}
\item As observed in Remark~\ref{remark:gh-suboptimality} and Figure~\ref{fig:wce-gh-scaled}, there is room for improvement in the upper and lower bounds for the worst-case error of a scaled Gauss--Hermite rule.
\item Extending the construction and error estimates in Section~\ref{sec:scaled-GH} for general weight functions would be interesting, but explicit error estimates may be more difficult to derive; see Remark~\ref{remark:general-weights}.
\item The point sets used in Theorem~\ref{thm:kq-univariate} and its tensor product extensions are likely sub-optimal, placing too many points away from the origin, where most of the probability mass is located, and being locally too uniform. We believe that it may be possible to derive exponential rates of convergence for this construction if the points are placed more carefully.
\item It is clear that the domain decomposition technique used to prove Proposition~\ref{prop:kernel-univariate-generic} and Theorem~\ref{thm:kq-univariate} can be used also in higher dimensions, circumventing the need for restrictive product grids. However, decomposition into sub-domains that are not translations of one another may be necessary, and this requires more careful handling of the constants $C$ and $h_0$ in Proposition~\ref{prop:rieger} or its generalisation for general domains~\citet{RiegerZwicknagl2010} and the constant $c_\textsm{qu}$ in~\eqref{eq:quasi-uniformity}.
\item The point selection and error analysis in Section~\ref{sec:kernel-quadrature} are not intrinsically related to the Gaussian kernel and weight function. Other kernels for which results similar to Proposition~\ref{prop:rieger} have been proved, such as those inducing Sobolev spaces, could be used instead.
\item As has been noted, various tractability results could be proved following~\citet{KuoSloanWozniakowski2017}.
\end{itemize}

\section*{Acknowledgements}

The authors were supported by the Lloyd's Register Foundation programme on data-centric engineering at the Alan Turing Institute, United Kingdom.
The authors are grateful to the reviewers for their suggestions and comments that led to sharper upper bounds.

\end{document}